 \newtheorem{thm}{Theorem} 
 \newtheorem{cor}[thm]{Corollary}
 \newtheorem{prop}[thm]{Proposition}
 \theoremstyle{definition}
 \newtheorem{defn}{Definition}
 \theoremstyle{remark}
 \newtheorem{rem}[thm]{Remark}
 \newtheorem{ex}{Example}
\newcommand{\Nn}{\mathbb N}
\renewcommand{\O}{\ensuremath{\mathcal{O}}}
\newcommand{\action}{\curvearrowright}
\newcommand{\raction}{\curvearrowleft}
\renewcommand{\d}{\mathrm d}
\newcommand{\lin}{\mathrm{lin}}
\newcommand{\eps}{\varepsilon}
\newcommand{\rmap}{\longrightarrow}
\newcommand{\X}{\ensuremath{\mathfrak{X}}}
\newcommand{\F}{\ensuremath{\mathcal{F}}}
\newcommand{\Lie}{\mathcal{L}}          
\DeclareMathOperator{\Hol}{Hol}         
\DeclareMathOperator{\hol}{hol}         
\newcommand{\G}[1]{G^{(#1)}}
\newcommand{\e}[1]{\eta^{(#1)}}
\newcommand{\al}{\alpha}                
\newcommand{\be}{\beta}
\newcommand{\s}{\mathbf{s}}             
\renewcommand{\t}{\mathbf{t}}           
\renewcommand{\gg}{\mathfrak{g}}        
\newcommand{\tto}{\rightrightarrows}    
\begin{document}

%
%
%
%
%
%
%
%
%

\title{Normal Forms and Lie Groupoid Theory}

\author{Rui Loja Fernandes}

\address{%
Department of Mathematics\\
University of Illinois at Urbana-Champaign\\
1409 W.~Green Street\\
Urbana, IL 61801, USA}

\email{ruiloja@illinois.edu}

\thanks{Supported in part by NSF grant DMS 1308472.}

\subjclass{Primary 53D17; Secondary 22A22}

\keywords{Normal form, linearization, Lie groupoid}

\date{\today}

\begin{abstract}
In these lectures I discuss the Linearization Theorem for Lie groupoids, and its relation to the various classical linearization theorems for submersions, foliations and group actions. In particular, I explain in some detail the recent metric approach to this problem proposed in \cite{fdh0}.
\end{abstract}

\maketitle
\addtocounter{section}{1}
\section*{Lecture 1: Linearization and Normal Forms}
In Differential Geometry one finds many different normal forms results which share the same flavor. In the last few years we have come to realize that there is more than a shared flavor to many of these results: they are actually instances of the same general result. The result in question is a linearization result for Lie groupoids, first conjectured by Alan Weinstein in \cite{wein1,wein2}. The first complete proof of the linearization theorem was obtained by Nguyen Tien Zung in \cite{zung}. Since then several clarifications and simplifications of the proof, as well as more general versions of this result, were obtained (see \cite{cs,fdh0}). In these lectures notes we give an overview of the current status of the theory. 

The point of view followed here, which was greatly influenced by an ongoing collaboration with Matias del Hoyo \cite{fdh0,fdh1,fdh2}, is that the linearization theorem can be thought of as an Ehresmann's Theorem for a submersion onto a stack. Hence, its proof should follow more or less the same steps as the proof of  the classical Ehresmann's Theorem, which can be reduced to a simple argument using the exponential map of a metric that makes the submersion Riemannian. Although I will not go at all into geometric stacks (see the upcoming paper \cite{fdh2}), I will adhere to the metric approach introduced in~\cite{fdh0}.

Let us recall the kind of linearization theorems that we have in mind. The most basic is precisely the following version of Ehresmann's Theorem:

\begin{thm}[Ehresmann]
Let $\pi:M\to N$ be a \textbf{proper} surjective submersion. Then $\pi$ is locally trivial: for every $y\in N$ there is a neighborhood $y\in U\subset N$, a neighborhood $0\in V\subset T_y N$, and diffeomorphism:
\[
\xymatrix{
 V\times \pi^{-1}(y)\ar[d]_{\text{pr}} \ar[rr]^{\cong}& & \pi^{-1}(U)\subset M\ar[d]^{\pi}\\
V\ar[rr]^{\cong} & & U
}
\]
\end{thm}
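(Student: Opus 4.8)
The plan is to prove Ehresmann's theorem via the metric/exponential-map argument that the introduction advertises, since this is the template the whole paper wants to generalize. The idea is to equip $M$ with a Riemannian metric that is adapted to the submersion $\pi$, and then use the normal exponential map along a fiber to produce the trivializing diffeomorphism. This keeps the proof geometric and makes the role of properness transparent.

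First I would choose any Riemannian metric on $M$. This splits each tangent space as $T_pM = \Ver_p \oplus \Hor_p$, where $\Ver_p = \Ker(d_p\pi)$ is the vertical space and $\Hor_p$ is its orthogonal complement; $d_p\pi$ restricts to an isomorphism $\Hor_p \cong T_{\pi(p)}N$. This gives an Ehresmann connection: a horizontal lift of tangent vectors, and hence of curves, on $N$. Fix $y \in N$ and let $F = \pi^{-1}(y)$ be the fiber. The plan is to flow the fiber along horizontal lifts of straight rays emanating from $y$. Concretely, choose a chart (or just the exponential map of a metric on $N$) identifying a neighborhood $U$ of $y$ with a star-shaped neighborhood $V$ of $0$ in $T_yN$. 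For $v \in V$ and $p \in F$, let $t \mapsto \gamma_v(t)$ be the ray $t\mapsto tv$ read into $U$, and let $\widetilde{\gamma}_v^{\,p}$ be its horizontal lift starting at $p$. Define
\[
\Psi: V \times F \To M, \qquad \Psi(v,p) = \widetilde{\gamma}_v^{\,p}(1).
\]
By construction $\pi(\Psi(v,p))$ depends only on $v$ and equals the point of $U$ corresponding to $v$, so the upper triangle of the diagram commutes with $\text{pr}$ on the left.

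Next I would check that $\Psi$ is a diffeomorphism onto $\pi^{-1}(U)$, after possibly shrinking $U$. Smoothness and smooth dependence on initial conditions follow from smooth dependence of solutions of ODEs on parameters; at $v=0$ the map is the identity on $F$, and a direct computation of $d\Psi$ at a point $(0,p)$ shows it is a linear isomorphism (the vertical directions map to $T_pF$, the $V$-directions map to the horizontal lift, and these are complementary). Hence $\Psi$ is a local diffeomorphism near $\{0\}\times F$; injectivity and surjectivity onto $\pi^{-1}(U)$ for small $U$ then upgrade this to the global statement over the fiber. Composing $\Psi$ with the identification $V\cong U$ on the base yields exactly the commuting square in the statement.

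The main obstacle is precisely where \textbf{properness} enters. Without properness the horizontal lifts $\widetilde{\gamma}_v^{\,p}$ need not be defined for all of $t\in[0,1]$, and $\Psi$ could fail to be surjective or even well-defined: a horizontal geodesic might escape to infinity in finite time, and the local diffeomorphism near the fiber need not extend to a diffeomorphism over a full neighborhood $U$. Properness of $\pi$ guarantees that $F$ is compact, so one can find a uniform time $\eps>0$ for which all lifts starting on $F$ exist, and a uniform radius for $V$; compactness also lets me pass from the pointwise local-diffeomorphism property to a genuine diffeomorphism via a tube/uniform-injectivity argument. Thus the heart of the matter is the completeness-and-uniformity estimate that compactness of the fiber supplies — this is the exact analogue of the compactness hypothesis that will reappear (as properness of the Lie groupoid) in the linearization theorem, which is why it is worth isolating here.
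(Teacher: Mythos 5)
Your construction is essentially the one the paper has in mind: the sketch in Lecture~4 takes a metric making $\pi$ a Riemannian submersion and uses the exponential map of the normal bundle of a fiber, which coincides with your map $\Psi$ when the connection is the metric one and the rays are geodesics; your variant with an arbitrary metric and horizontal lifts of rays is equally valid as a construction. The definition of $\Psi$, its smoothness, the computation of $d\Psi$ along $\{0\}\times F$, and the injectivity/uniformity arguments that use compactness of $F=\pi^{-1}(y)$ are all fine. The genuine gap is in your final paragraph, where you assert that properness enters \emph{only} through compactness of $F$ together with uniform ODE estimates. That is not enough: compactness of the single fiber gives that $\Psi$ is well defined on $V\times F$ and is a diffeomorphism onto its image, an open tube $W=\Psi(V\times F)$ around $F$; it does \emph{not} give the surjectivity statement $W\supset\pi^{-1}(U)$ for some smaller $U\ni y$, which is what the theorem claims. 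Nothing in your argument prevents nearby fibers from having pieces that never meet the tube.

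Concretely, take $M=(\mathbb{R}\times S^1)\sqcup\bigl((0,\infty)\times\mathbb{R}\bigr)$, $N=\mathbb{R}$, and $\pi$ the projection onto the first factor on both pieces. This is a surjective submersion, $F=\pi^{-1}(0)\cong S^1$ is compact, and every step you wrote goes through verbatim: $\Psi$ is defined for $\abs{v}<\eps$ and is a diffeomorphism onto the tube $(-\eps,\eps)\times S^1$. Yet the conclusion fails for every neighborhood $U$ of $0$, because $\pi^{-1}(U)$ also contains the noncompact piece $(0,\eps)\times\mathbb{R}$; the nearby fibers are not even diffeomorphic to $F$. What properness buys, beyond compactness of $F$, is exactly this missing step, and you can supply it in either of two ways. (i) A proper map between manifolds is closed, so for the open tube $W$ the set $U:=N\setminus\pi(M\setminus W)$ is an open neighborhood of $y$ with $\pi^{-1}(U)\subset W$; that is, every neighborhood of the fiber contains a \emph{saturated} one --- precisely the point the paper isolates when passing from weak linearization of proper groupoids to invariant linearization of s-proper ones. (ii) Alternatively, prove surjectivity by backward lifts: for $q\in\pi^{-1}(U)$ with $\overline{U}$ compact, the horizontal lift starting at $q$ of the reversed ray from $\pi(q)$ to $y$ stays in $\pi^{-1}(\overline{U})$, which is compact by properness, so by the escape lemma it is defined up to time $1$ and ends on $F$, showing $q\in\Psi(V\times F)$. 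Either (i) or (ii) closes the gap; without one of them the surjectivity claim is not merely unproved but false, as the example shows.
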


One can also assume that there is some extra geometric structure behaving well with respect to the submersion, and then ask if one can achieve ``linearization'' of both the submersion and the extra geometric structure. For example, if one assumes that $\omega\in\Omega^2(M)$ is a closed 2-form such that the pullback of $\omega$ to each fiber is non-degenerate, then one can show that $\pi$ is a locally trivial symplectic fibration (see, e.g., \cite{ms}). We will come back to this later, for now we recall another basic linearization theorem:

\begin{thm}[Reeb]
Let $\F$ be a foliation of $M$ and let $L_0$ be a compact leaf of $\F$ with \textbf{finite holonomy}. Then there exists a saturated neighborhood $L_0\subset U\subset M$, a $\hol(L_0)$-invariant neighborhood $0\in V\subset\nu_{x_0}(L_0)$, and a diffeomorphism:
\[ \xymatrix{
\widetilde{L_0}^h\underset{\hol(L_0)}{\times} V \ar[rr]^{\cong}& &U\subset M}
\]
sending the linear foliation to $\F|_U$.
\end{thm}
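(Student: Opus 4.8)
The plan is to reduce the theorem to a linearization of the holonomy action on a transversal and then to globalize it over the compact leaf by means of the exponential map of an adapted metric, in the same spirit as the proof of Ehresmann's Theorem quoted above. First I would fix $x_0\in L_0$ and a small embedded transversal $T$ through $x_0$, identified with a neighborhood of $0$ in $\nu_{x_0}(L_0)$. Holonomy along loops in $L_0$ gives the holonomy homomorphism $\pi_1(L_0,x_0)\to\mathrm{Germ}(T,x_0)$, whose image is by hypothesis the finite group $\hol(L_0)$; its kernel defines the regular covering $\widetilde{L_0}^h\to L_0$ with deck group $\hol(L_0)$. The target of the asserted diffeomorphism is exactly the suspension of this germinal action, so the content of the theorem is that this suspension, with its linear foliation, models a neighborhood of $L_0$.

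The crucial step, and the one where finiteness is indispensable, is to linearize the holonomy action. Since $\hol(L_0)$ is finite, I would average an arbitrary Riemannian metric on $T$ over $\hol(L_0)$ to obtain a metric for which $\hol(L_0)$ acts by isometries fixing $x_0$; the exponential map of the averaged metric at $x_0$ then conjugates the germinal action to its linearization, i.e. to the linear holonomy representation on $V=\nu_{x_0}(L_0)$ (Bochner's linearization lemma). This identifies a small invariant transversal with a $\hol(L_0)$-invariant neighborhood $0\in V\subset\nu_{x_0}(L_0)$ carrying a linear action.

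It remains to globalize over $L_0$. The linear holonomy representation $\pi_1(L_0,x_0)\to\GL(V)$ factors through the finite group $\hol(L_0)$, so the Bott connection exhibits $\nu(L_0)$ as a flat bundle with finite structure group, whence $\nu(L_0)\cong\widetilde{L_0}^h\times_{\hol(L_0)}V$. Choosing a metric on $M$ near $L_0$ that is bundle-like for $\F$, and using that $L_0$ is compact to obtain a tubular radius bounded below, the normal exponential map $\exp^\perp$ becomes a diffeomorphism from a uniform tube about the zero section onto a neighborhood $U$ of $L_0$. Because the nearby leaves are then compact (finite covers of $L_0$) and fill the tube, $U$ is saturated, and composing the two identifications yields the required diffeomorphism $\widetilde{L_0}^h\times_{\hol(L_0)}V\xrightarrow{\cong}U$.

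The main obstacle is the final compatibility: that this map carries the horizontal leaves $\widetilde{L_0}^h\times\{v\}$ of the linear foliation onto the leaves of $\F$. This is not automatic for an arbitrary metric; one must arrange the metric so that the normal geodesics assemble into the plaques of $\F$, that is, so that $\exp^\perp$ is foliated. Controlling this uniformly along the whole of $L_0$ is precisely where the compactness of $L_0$ and the finiteness of the holonomy enter in an essential way, the latter ensuring that the nearby leaves close up as finite covers rather than winding around indefinitely.
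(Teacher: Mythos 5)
Your first half is sound: Bochner linearization of the finite germinal holonomy action on a transversal (averaging a metric over $\hol(L_0)$, modulo the standard care needed because holonomy elements are only germs), and the identification $\nu(L_0)\cong\widetilde{L_0}^h\times_{\hol(L_0)}V$ via the Bott connection, are both correct and are indeed where finiteness enters. The proof breaks down at the globalization step, and your last paragraph concedes rather than closes the gap. Two things are asserted but never established: (i) that one can choose a metric near $L_0$ which is bundle-like for $\F$, so that $\exp^\perp$ carries the flat sections of $\nu(L_0)$ onto leaves; and (ii) that the leaves through points of a small tube are compact finite covers of $L_0$ filling the tube, so that $U$ is saturated. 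For (i), the existence of a bundle-like metric near $L_0$ is essentially equivalent to the conclusion of the theorem: once $\F|_U$ is known to be the linear foliation of a flat bundle with finite structure group, such a metric exists by averaging, but nothing in your construction produces one beforehand --- averaging over $\hol(L_0)$ only controls the metric on one transversal slice at $x_0$, not the compatibility of normal geodesics with plaques along all of $L_0$. For (ii), the statement that nearby leaves ``close up as finite covers'' is precisely the weak Reeb stability theorem, which needs its own sheet-counting argument over a finite good cover of the compact leaf; it cannot be invoked as a side remark, since the saturation of $U$ rests on it. So the decisive step is missing, not merely deferred.

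It is worth seeing how the paper fills exactly this hole, because it does so by replacing the bundle-like metric on $M$ (which you cannot construct a priori) with a metric on the groupoid. Under the hypotheses the holonomy groupoid $\Hol(\F)\tto M$ is proper, indeed s-proper; properness allows an averaging construction (the gauge trick) producing a $2$-metric $\e{2}$ on the space of composable arrows, and the multiplicativity built into the definition of a $2$-metric ($S_3$-invariance, face maps Riemannian submersions) forces the exponential maps of the induced metrics $\e{0}$, $\e{1}$, $\e{2}$ to intertwine all the structure maps. The exponential map is therefore automatically a groupoid --- hence foliated --- isomorphism near $L_0$: this is exactly the compatibility you flagged as the main obstacle. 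Finally, s-properness (compactness of the leaves plus finite holonomy) is what yields a saturated neighborhood, i.e. your point (ii). Since the linear model of $\Hol(\F)$ along $L_0$ is the holonomy groupoid of the linear foliation of $\widetilde{L_0}^h\times_{\hol(L_0)}\nu_{x_0}(L_0)$, the groupoid linearization specializes to the statement of Reeb's theorem.
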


Here, $\widetilde{L_0}^h\to L$ denotes the holonomy cover, a $\hol(L_0)$-principal bundle, and the holonomy group $\hol(L_0)$ acts on the normal space $\nu_{x_0}(L_0)$ via the linear holonomy representation. By ``linear foliation" we mean the quotient of the horizontal foliation $\{\widetilde{L_0}^h\times \{t\}$, $t\in\nu_{x_0}(L)\}$.

Notice that this result generalizes Ehresmann's Theorem, at least when the fibers of the submersion are connected: any leaf of the foliation by the fibers of $\pi$ has trivial holonomy so $\hol(L_0)$ acts trivially on the transversal, and then Reeb's theorem immediately yields Ehresmann's Theorem. For this reason, maybe it is not so surprising that the two results are related.

Let us turn to a third linearization result which, in general, looks to be of a different nature from the previous results. It is a classical result from Equivariant Geometry often referred to as the Slice Theorem (or Tube Theorem):

\begin{thm}[Slice Theorem]
Let $K$ be a Lie group acting in a \textbf{proper} fashion on $M$. Around any orbit $\O_{x_0}\subset M$ the action can be linearized: there exist $K$-invariant neighborhoods $\O_{x_0}\subset U\subset M$ and $0_{x_0}\in V\subset \nu_{x_0}(\O_{x_0})$ and a $K$-equivariant diffeomorphism:
\[ \xymatrix{
K\times_{K_{x_0}}V\ar[rr]^{\cong}& & U\subset M}
\]
\end{thm}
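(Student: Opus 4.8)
The plan is to follow the metric approach emphasized in these notes: produce a $K$-invariant Riemannian metric on $M$ and use its exponential map, which is then automatically $K$-equivariant, to identify a tubular neighborhood of the orbit with the normal bundle. This is the equivariant analogue of the exponential-map proof of Ehresmann's Theorem, with the orbit $\O_{x_0}$ playing the role of a fiber.

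First I would record the two consequences of properness that make the statement meaningful: the isotropy group $K_{x_0}$ is compact, and the orbit $\O_{x_0}$ is a closed embedded submanifold, $K$-equivariantly diffeomorphic to the homogeneous space $K/K_{x_0}$. Using properness once more, I would average a background metric against a Haar measure on $K$ — inserting a suitable cutoff to accommodate the case where $K$ is non-compact — to obtain a $K$-invariant metric $g$ on $M$, so that $K$ acts by isometries. I would then identify $\nu_{x_0}(\O_{x_0})$ with the $g$-orthogonal complement of $T_{x_0}\O_{x_0}$ in $T_{x_0}M$, on which $K_{x_0}$ acts linearly via the orthogonal slice representation; globalizing along the orbit gives a $K$-equivariant identification of the normal bundle with the associated bundle $K\times_{K_{x_0}}\nu_{x_0}(\O_{x_0})$. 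The crucial observation is that, since $K$ acts by isometries, it permutes geodesics, so the normal exponential map $\exp\colon\nu(\O_{x_0})\to M$ is $K$-equivariant.

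The final step is to promote a pointwise tubular neighborhood into a $K$-invariant one. The tubular neighborhood theorem provides a ball $0_{x_0}\in V\subset\nu_{x_0}(\O_{x_0})$ on which the normal exponential is a diffeomorphism onto its image; since $K_{x_0}$ is compact I can shrink $V$ to be $K_{x_0}$-invariant, for instance to a metric ball. Equivariance of $\exp$ then spreads this single good fiber over the whole orbit, producing the desired $K$-equivariant diffeomorphism $K\times_{K_{x_0}}V\cong U$ onto a $K$-invariant open neighborhood $U$ of $\O_{x_0}$, as claimed.

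I expect the main obstacle to be uniformity over the orbit when $K$, and hence $\O_{x_0}$, is non-compact: a priori the injectivity radius of the normal exponential could degenerate along the orbit. This is precisely where equivariance is indispensable, since $\exp$ intertwines the two $K$-actions and $\O_{x_0}$ is a single $K$-orbit, the size of the tube is forced to be constant along $\O_{x_0}$, and verifying the diffeomorphism property at the single point $x_0$ suffices. The other delicate ingredient, the construction of the invariant metric $g$ for non-compact $K$, is exactly the place where the hypothesis that the action is \textbf{proper} enters.
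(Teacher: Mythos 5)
Your route is the classical one---average to get a $K$-invariant metric on $M$, then use the equivariant normal exponential map---and it differs from the proof in these notes, which instead passes through the action groupoid $K\ltimes M\tto M$: there one puts a $2$-metric on its nerve, applies the groupoid Linearization Theorem via the exponential maps of the induced metrics, and identifies the linear model with $K\ltimes(K\times_{K_{x_0}}\nu_{x_0}(\O_{x_0}))$. Your strategy is viable (it is the standard textbook proof), and the preliminary steps are right: properness gives compact isotropy, a closed embedded orbit, and (via a cutoff function and Haar measure) an invariant metric, and isometries do intertwine exponential maps, so $\exp$ is $K$-equivariant.

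The gap is in the last step. Equivariance does make the local-diffeomorphism locus of $\exp$ a $K$-invariant set, so the radius on which $F\colon K\times_{K_{x_0}}V\to M$, $[k,v]\mapsto k\cdot\exp_{x_0}(v)$, is a \emph{local} diffeomorphism is indeed uniform along the orbit. But \emph{injectivity} of $F$ is not a consequence of equivariance plus ``the diffeomorphism property at the single point $x_0$'': the failure mode is an element $k\in K$ far from $K_{x_0}$ with $k\cdot\exp_{x_0}(V)\cap\exp_{x_0}(V)\neq\emptyset$, and equivariance says nothing about such $k$. Concretely, for the irrational line flow $\Rr\action\Tt^2$ with the flat metric, everything you actually invoke in this step holds (invariant metric, equivariant $\exp$, $\exp$ a diffeomorphism on each single normal fiber, compact isotropy), yet no tube exists because the orbit accumulates on itself; that action is not proper, which is exactly the point---properness must be used a \emph{second} time, in the injectivity step, and your argument never does so. The standard repair is a limiting argument: if injectivity failed for every ball of radius $1/n$, there would be $k_n\exp_{x_0}(v_n)=\exp_{x_0}(v_n')$ with $v_n,v_n'\to 0$ and $[k_n,v_n]\neq[e,v_n']$; invariance of the metric gives $d(k_n x_0,x_0)\to 0$, and properness (equivalently here, embeddedness of the orbit together with compactness of $K_{x_0}$) lets you write, after passing to a subsequence, $k_n=l_n h_n$ with $l_n\to e$ and $h_n\in K_{x_0}$, whence both $[k_n,v_n]=[l_n,h_n v_n]$ and $[e,v_n']$ converge to $[e,0]$, contradicting local injectivity of $F$ at $[e,0]$. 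With this lemma inserted your proof is complete; note that the notes themselves flag this very issue---for non-compact $K$ the groupoid theorem only yields weak linearization, and the Slice Theorem requires precisely such an extra uniformity argument, special to action groupoids.
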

Here $K_{x_0}$ acts on the normal space $\nu_{x_0}(\O_{x_0})$ via the normal isotropy representation. 
If the action is locally free then the orbits form a foliation, the isotropy groups $K_x$ are finite and $\hol(\O_x)$ is a quotient of $K_x$. Moreover, the action of $K_{x}$ on a slice descends to the linear holonomy action of $\hol(\O_x)$. The slice theorem is then a special case of the Reeb stability theorem. However, in general, the isotropy groups can have positive dimension and the two results look apparently quite different.

Again, both in the case of foliations and in the case of group actions, we could consider extra geometric structures (e.g., a metric or a symplectic form) and ask for linearization taking into account this extra geometric structure. One can find such linearization theorems in the literature (e.g., the local normal form theorem for Hamiltonian actions \cite{gs}). Let us mention one such recent result from Poisson geometry, due to Crainic and Marcut \cite{cm}:

\begin{thm}[Local normal form around symplectic leaves]
Let $(M,\pi)$ be a Poisson manifold and let $S\subset M$ be a compact symplectic leaf. If the Poisson homotopy bundle $G\action P\to S$ is a smooth \textbf{compact} manifold with vanishing second de Rham cohomology group, then there is a neighborhood $S\subset U\subset M$, and a Poisson diffeomorphism:
\[ \phi:(U,{\pi|}_U)\to (P\times_G \gg,\pi^\lin).\] 
\end{thm}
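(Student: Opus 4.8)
The plan is to lift the problem from Poisson manifolds to symplectic groupoids, where the linearization theorem for proper Lie groupoids — the central result of these lectures — can be brought to bear, and then to descend the resulting groupoid statement back to the Poisson level by a Moser argument. First I would integrate the Poisson structure: the hypothesis that the Poisson homotopy bundle $G\action P\to S$ is smooth and compact is precisely what guarantees that $(M,\pi)$ is integrable in a neighborhood of $S$ by a symplectic groupoid $(\Sigma,\Omega)\tto M$, and moreover that the restriction $\Sigma_S:=\s^{-1}(S)\cap\t^{-1}(S)$ is a \emph{proper} Lie groupoid over $S$. Indeed $\Sigma_S$ is the gauge groupoid $(P\times P)/G$ of the principal bundle, which is proper exactly because $P$ is compact. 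The leaf $S$ is then the orbit of $\Sigma$ through which we wish to linearize, and the local model $P\times_G\gg$ carries its own symplectic groupoid $(\Sigma^{\lin},\Omega^{\lin})$ integrating $\pi^{\lin}$, whose restriction to $S$ is again $(P\times P)/G$.

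Next I would linearize the groupoid. Since $\Sigma_S$ is proper, the linearization theorem applies and produces a groupoid isomorphism between a neighborhood of $\Sigma_S$ in $\Sigma$ and a neighborhood of $\Sigma_S$ in the normal model $\Sigma^{\lin}$, covering a diffeomorphism of a neighborhood of $S$ in $M$ with a neighborhood of $S$ in $P\times_G\gg$. In the spirit advertised in the introduction, this is obtained exactly as in the proof of Ehresmann's Theorem: one averages a Riemannian metric to make it multiplicative for the proper groupoid $\Sigma_S$, and the associated exponential map then intertwines the two groupoid structures. At this stage the smooth and groupoid structures have been matched, but the symplectic forms have not.

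The final and hardest step is to promote this groupoid isomorphism to a symplectomorphism, from which a Poisson diffeomorphism of the bases descends automatically, since a symplectic-groupoid isomorphism covers a Poisson diffeomorphism of units. After pulling back we have two multiplicative symplectic forms $\Omega$ and $\Omega^{\lin}$ on the same neighborhood of $\Sigma_S$ agreeing along $\Sigma_S$, and a multiplicative Moser argument should deform one into the other along the path $\Omega_t=\Omega^{\lin}+t(\Omega-\Omega^{\lin})$. The obstruction to solving the associated homotopy equation within the class of multiplicative forms lives in the degree-two cohomology of $\Sigma$ along $S$, which by Morita invariance of groupoid cohomology the hypotheses identify with $H^2_{\mathrm{dR}}(P)$. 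The assumption $H^2_{\mathrm{dR}}(P)=0$ is exactly what kills this obstruction and makes the Moser flow exist.

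I expect this last step — keeping the primitive \emph{multiplicative} throughout the homotopy, and controlling it along the non-compact normal directions modeled on $\gg$ so that the flow is complete near $\Sigma_S$ — to be the main technical obstacle. By contrast, the integration and groupoid-linearization steps are comparatively formal once the properness of $\Sigma_S$, and hence of $\Sigma$ near $\Sigma_S$, has been secured from the compactness of $P$.
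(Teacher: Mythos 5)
Your proposal follows essentially the same route as the paper's own sketch: integrate $(M,\pi)$ near $S$ to a symplectic Lie groupoid (the compactness/smoothness of the Poisson homotopy bundle securing integrability and properness — the paper emphasizes s-properness of the source $1$-connected integration, which is what yields a \emph{saturated} linearization neighborhood), then apply the metric-based linearization theorem to the groupoid, and finally use a multiplicative Moser-type argument, with $H^2_{\mathrm{dR}}(P)=0$ killing the obstruction, to bring the symplectic structure on the linear model to canonical form and descend to the Poisson diffeomorphism. You also correctly identify, as the paper does, that the groupoid linearization alone does not give the Poisson normal form and that the Moser step is where the real technical work lies.
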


We will not discuss here the various terms appearing in the statement of this theorem, referring the reader to the original work \cite{cm}. However, it should be clear that this result has the same flavor as the previous ones: some compactness type assumption around a leaf/orbit leads to linearization or a normal form of the geometric structure in a neighborhood of the leaf/orbit.

Although all these results have the same flavor, they do look quite different. Moreover, the proofs that one can find in the literature of these linearization results are also very distinct. So it may come as a surprise that they are actually just special cases of a very general linearization theorem.

In order to relate all these linearization theorems, and to understand the significance of the assumptions one can find in their statements, one needs a language where all these results fit into the same geometric setup. This language exists and it is a generalization of the usual Lie theory from groups to groupoids. We will recall it in the next lecture. After that, we will be in shape to state the general linearization theorem and explain how the results stated before are special instances of it.

\addtocounter{section}{1}
\section*{Lecture 2: Lie Groupoids}

In this Lecture we provide a quick introduction to Lie groupoids and Lie algebroids. We will focus mostly on some examples which have special relevance to us. A more detailed discussion, along with proofs, can be found in \cite{cf}. Let us start by recalling: 

\begin{defn}
A \textbf{groupoid} is a small category where all arrows are invertible.
\end{defn}

Let us spell out this definition. We have a \textbf{set of objects} $M$, and a \textbf{set of arrows} $G$. For each arrow $g\in G$ we can associate its source $\s(g)$ and its target $\t(g)$, resulting in two maps $\s,\t:G\to M$. We also write $g: x\rmap y$ for an arrow with source $x$ and target $y$. 

For any pair of composable arrows we have a \textbf{product} or composition map:
\[ m: \G2\to G, (g,h)\mapsto gh. \]
In general, we will denote by $\G{n}$ the set of $n$ strings of composable arrows:
\[ \G{n}:=\{(g_1,\dots,g_n): \s(g_i)=\t(g_{i+1})\}.\]
The multiplication satisfies the \emph{associativity property}:
\[ (gh)k=g(hk), \quad \forall (g,h,k)\in\G3. \]

For each object $x\in M$ there is an identity arrow $1_x$ and the \emph{identity property} holds:
\[ 1_{\t(g)}g=g=g 1_{\s(g)},\quad \forall g\in G. \]
It gives rise to an \textbf{identity section} $u:M\to G$, $x\mapsto 1_x$. 

For each arrow $g\in G$ there is an inverse arrow $g^{-1}\in G$, for which the \emph{inverse property} holds:
\[ g g^{-1}=1_{\t(g)},\quad g^{-1}g=1_{\s(g)},\quad\forall g\in G.\]
This gives rise to the \textbf{inverse} map $\iota:G\to G$, $g\mapsto g^{-1}$.

\begin{defn}
A \textbf{morphism of groupoids} is a functor $\F:G\to H$.
\end{defn}

This means that we have a map $\F: G\to H$ between the sets of arrows and a map $f: M\to N$ between the sets of objects, making the following diagram commute:
\[
\xymatrix{
 G \ar@<0.25pc>[d]^{\t} \ar@<-0.25pc>[d]_{\s} \ar[r]^{\F} & H \ar@<0.25pc>[d]^{\t}\ar@<-0.25pc>[d]_{\s} \\
M \ar[r]_{f} & N}
\]
such that $\F(gh)=\F(g)\F(h)$ if $g, h\in G$ are composable, and $\F(1_x)= 1_{f(x)}$ for all $x\in M$.

We are interested in groupoids and morphisms of groupoids in the smooth category:

\begin{defn}
A \textbf{Lie groupoid} is a groupoid $G\tto M$ whose spaces of arrows and objects are both manifolds, the structure maps $\s,\t,u,m,i$ are all smooth maps and such that $\s$ and $\t$ are submersions. A \textbf{morphisms of Lie groupoids} is a morphism of groupoids for which the underlying map $\F: G\to H$ is smooth.
\end{defn}

Before we give some examples of Lie groupoids, let us list a few basic properties. 
\begin{itemize}
\item the unit map $u:M\to G$ is an embedding and the inverse $\iota:G\to G$ is a diffeomorphism.

\item The source fibers are embedded submanifolds of $G$ and  \textbf{right multiplication by $g: x\rmap y$} is a diffeomorphism between $\s$-fibers
\[ R_{g}: \s^{-1}(y)\rmap \s^{-1}(x),\quad h\mapsto hg .\]

\item The target fibers are embedded submanifolds of $G$ and \textbf{left multiplication by  $g: x\rmap y$} is a diffeomorphism between $\t$-fibers:
\[ L_{g}: \t^{-1}(x)\rmap \t^{-1}(y),\quad h\mapsto gh .\]

\item The \textbf{isotropy group at $x$}:
\[ G_x=\s^{-1}(x)\cap\t^{-1}(x).\]
is a Lie group.

\item The \textbf{orbit through $x$}: 
\[ \O_x:=\t(\s^{-1}(x))=\{y\in M: \exists\ g: x\rmap y\} \]
is a regular immersed (possibly disconnected) submanifold.

\item The map $\t:\s^{-1}(x)\to \O_x$ is a principal $G_x$-bundle.
\end{itemize}

The connected components of the orbits of a groupoid $G\tto M$ form a (possibly singular) foliation of $M$. The set of orbits is called the \textbf{orbit space} and denoted by $M/G$. The quotient topology makes the natural map $\pi:M\to M/G$ into an open, continuous map. In general, there is no smooth structure on $M/G$ compatible with the quotient topology, so $M/G$ is a singular space. One may think of a groupoid as a kind of atlas for this orbit space.

There are several classes of Lie groupoids which will be important for our purposes. A Lie groupoid $G\tto M$ is called \textbf{source $k$-connected} if the $s$-fibers $\s^{-1}(x)$ are $k$-connected for every $x\in M$. When $k=0$ we say that $G$ is a \textbf{$s$-connected groupoid}, and when $k=1$ we say that $G$ is a \textbf{$s$-simply connected groupoid}. We call the groupoid \textbf{\'etale} if $\dim G=\dim M$, which is equivalent to requiring that the source or target map be a local diffeomorphism. The map $(\s, \t): G\to M\times M$  is sometimes called the \textbf{anchor of the groupoid} and a groupoid is called \textbf{proper} if the anchor $(\s, \t): G\to M\times M$ is a proper map. In particular, when the source map is proper, we call the groupoid \textbf{s-proper}. We will see later that proper and s-proper groupoids are, in some sense, analogues of compact Lie groups.

\begin{ex}
Perhaps the most elementary example of a Lie groupoid is the \textbf{unit groupoid} $M\tto M$ of any manifold $M$: for each object $x\in M$, there is exactly one arrow, namely the identity arrow. More generally, given an open cover $\{U_i\}$ of $M$ one constructs a \textbf{cover groupoid} $\bigsqcup_{i,j\in I} U_i\cap U_j\tto \bigsqcup_{i\in I} U_i$: we picture an arrow as $(x,i)\rmap (x,j)$ if $x\in U_i\cap U_j$. The structure maps should be obvious. In both these examples, the orbit space coincides with the original manifold $M$ and the isotropy groups are all trivial. These are all examples of proper, \'etale, Lie groupoids. If the open cover is not second countable, then the spaces of arrows and objects are not second countable manifolds. In Lie groupoid theory one sometimes allows manifolds which are not second countable. However, we will always assume manifolds to be second countable.
\end{ex}

\begin{ex}
Another groupoid that one can associate to a manifold is the \textbf{pair groupoid} $M\times M\tto M$: an ordered pair $(x,y)$ determines an arrow $x\rmap y$. This is an example of \textbf{transitive groupoid}, i.e., a groupoid with only one orbit. More generally, if $K\action P\to M$ is a principal $K$-bundle, then $K$ acts on the pair groupoid $P\times P\tto P$ by groupoid automorphisms and the quotient $P\times_K P\tto M$ is a transitive groupoid called the \textbf{gauge groupoid}. Note that for any $x\in M$ the isotropy group $G_x$ is isomorphic to $K$ and the principal $G_x$-bundle $\t:\s^{-1}(x)\to M$ is isomorphic to the original principal bundle. Conversely, it is easy to see that any transitive groupoid $G\tto M$ is isomorphic to the gauge groupoid of any of the principal $G_x$-bundles $\t:\s^{-1}(x)\to M$. It is easy to check that that the gauge groupoid associated with a principal $K$-bundle $P\to M$ is proper if and only if $K$ is a compact Lie group. Moreover, it is s-proper (respectively, source $k$-connected) if and only if $P$ is compact (respectively, $k$-connected).
\end{ex}

\begin{ex}
To any surjective submersion $\pi:M\to N$ one can associate the \textbf{submersion groupoid} $M\times_N M\tto M$. This is a subgroupoid of the pair groupoid $M\times M\tto M$, but it fails to be transitive if $N$ has more than one point: the orbits are the fibers of $\pi:M\to N$, so the orbit space is precisely $N$. The isotropy groups are all trivial. The submersion groupoid is always proper and it is s-proper if and only if $\pi$ is a proper map. 
\end{ex}

\begin{ex}
Let $\F$ be a foliation of a manifold $M$. We can associate to it the \textbf{fundamental groupoid} $\Pi_1(\F)\tto M$, whose arrows correspond to foliated homotopy classes of paths (relative to the end-points). Given such an arrow $[\gamma]$, the source and target maps are $\s([\gamma])=\gamma(0)$ and $\t([\gamma])=\gamma(1)$, while multiplication corresponds to concatenation of paths. One can show that $\Pi_1(\F)$ is indeed a manifold, but it may fail to be Hausdorff. In fact, this groupoid is Hausdorff precisely when $\F$ has no vanishing cycles. In Lie groupoid theory, one often allows the total space of a groupoid to be non-Hausdorff, while $M$ and the source/target fibers are always assumed to be Hausdorff. On the other hand, we will always assume manifolds to be second countable. 

Another groupoid one can associate to a manifold is the \textbf{holonomy groupoid} $\Hol(\F)\tto M$, whose arrows correspond to holonomy classes of paths. Again, one can show that $\Hol(\F)$ is a manifold, but it may fail to be Hausdorff. In general, the fundamental groupoid and the holonomy groupoid are distinct, but there is an obvious groupoid morphism $\F:\Pi_1(\F)\to \Hol(\F)$, which to a homotopy class of a path associates the holonomy class of the path (recall that the holonomy only depends on the homotopy class of the path). This map is a local diffeomorphism. 

It should be clear that the leaves of these groupoids coincide with the leaves of $\F$ and that the isotropy groups of $\Pi_1(\F)$ (respectively, $\Hol(\F)$) coincide with the fundamental groups (respectively, holonomy groups) of the leaves. It is easy to check also that $\Pi_1(\F)$ is always $s$-connected. One can show that $\Pi_1(\F)$ (respectively, $\Hol(\F)$) is s-proper if and only if the leaves of $\F$ are compact and have finite fundamental group (respectively, finite holonomy group). In general, it is not so easy to give a characterization in terms of $\F$ of when these groupoids are proper. 
\end{ex}

\begin{ex}
Let $K\action M$ be a smooth action of a Lie group $K$ on a manifold $M$. The associated \textbf{action groupoid} $K\ltimes M\tto M$ has arrows the pairs $(k,x)\in K\times M$, source/target maps given by $\s(k,x)=x$ and $\t(k,x)=kx$, and composition:
\[ (k_1,y)(k_2,x)=(k_1k_2,x),\quad\text{if }y=k_1x. \]
The isotropy groups of this action are the stabilizers $K_x$ and the orbits coincide with the orbits of the action. The action groupoid is proper (respectively, s-proper) precisely when the action is proper (respectively, $K$ is compact). Moreover, this groupoid is source $k$-connected if and only if $K$ is $k$-connected.
\end{ex}

Lie groupoids, just like Lie groups, have associated infinitesimal objects known as \emph{Lie algebroids}:

\begin{defn}
A \textbf{Lie algebroid} is a vector bundle $A\to M$ together with a Lie bracket $[~,~]_A:\Gamma(A)\times\Gamma(A)\to \Gamma(A)$ on the space of sections and a bundle map $\rho_A:A\to TM$, such that the following Leibniz identity holds:
\begin{equation}\label{eq:Leibniz}
[\al,f\be]_A=f[\al,\be]_A+\rho_A(X)(f) Y,
\end{equation}
for all $f\in C^\infty(M)$ and $\al,\be\in\Gamma(A)$.
\end{defn}

Given a Lie groupd $G\tto M$ the associated Lie algebroid is obtained as follows. One lets $A:=\ker\d_M\s$ be the vector bundle whose fibers consists of the tangent spaces to the s-fibers along the identity section. Then sections of $A$ can be identified with right-invariant vector fields on the Lie groupoid, so the usual Lie bracket of vector fields induces a Lie bracket on the sections. The anchor is obtained by restricting the differential of the target, i.e., $\rho_A:=\d\t|_A$.

There is a Lie theory for Lie groupoids/algebroids analogous to the usual Lie theory for Lie groups/algebras. There is however one big difference: Lie's Third Theorem fails and there are examples of Lie algebroids which are not associated with a Lie groupoid (\cite{cf2}). We shall not give any more details about this correspondence since we will be working almost exclusively at the level of Lie groupoids. We refer the reader to \cite{cf} for a detailed discussion of Lie theory in the context of Lie groupoids and algebroids.

\addtocounter{section}{1}
\section*{Lecture 3: The Linearization Theorem}

For  a Lie groupoid $G\tto M$ a submanifold $N\subset M$ is called \textbf{saturated} if it is a union of orbits. Our aim now is to state the linearization theorem which, under appropriate assumptions, gives a normal form for the Lie groupoid in a neighborhood of a saturated submanifold. First we will describe this local normal form, which depends on some standard constructions in Lie groupoid theory.

Let $G\tto M$ be a Lie groupoid. The \textbf{tangent Lie groupoid} $TG\tto TM$ is obtained by applying the tangent functor: hence, the spaces of arrows and objects are the tangent bundles to $G$ and to $M$, the source and target maps are the differentials $\d\s,\d\t:TG\to TM$, the multiplication is the differential $\d m:(TG)^2\equiv T\G2\to TG$, etc.

Assume now that $S\subset M$ is a saturated submanifold. An important special case to keep in mind is when $S$ consists of a single orbit of $G$. Then we can restrict the groupoid $G$ to $S$:
\[ G_S:=\t^{-1}(S)=\s^{-1}(S), \]
obtaining a Lie subgroupoid $G_S\tto S$ of $G\tto M$. If we apply the tangent functor, we obtain a Lie subgroupoid $TG_S\tto TS$ of $TG\tto TM$. 

\begin{prop}
There is a short exact sequence of Lie groupoids:
\[ 
\xymatrix{
1\ar[r] & TG_S \ar@<0.25pc>[d] \ar@<-0.25pc>[d] \ar[r] & TG \ar@<0.25pc>[d] \ar@<-0.25pc>[d] \ar[r]  & \nu(G_S)\ar@<0.25pc>[d] \ar@<-0.25pc>[d] \ar[r] & 1\\
0\ar[r]  & TS \ar[r] & TM \ar[r] & \nu(S)\ar[r]& 0}
 \]
\end{prop}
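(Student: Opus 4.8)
The plan is to read the diagram with all tangent bundles restricted over $S$, so that ``$TM$'' means $TM|_S$ and ``$TG$'' means $TG|_{G_S}$; the two rows are then the defining exact sequences of the normal bundles $\nu(S)=TM|_S/TS$ (over $S$) and $\nu(G_S)=TG|_{G_S}/TG_S$ (over $G_S$), and the real content is that each column is a Lie groupoid and the horizontal maps are groupoid morphisms. I would first check that the left two columns make sense: $TM|_S$ is saturated for the tangent groupoid $TG\tto TM$, since if $\d\s(\zeta)\in T_xM$ with $x\in S$ for some $\zeta\in T_gG$, then $\s(g)=x\in S$, whence $g\in G_S$ by saturation of $S$ and $\d\t(\zeta)\in TM|_S$. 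Thus $TG|_{G_S}\tto TM|_S$ is the restriction of $TG\tto TM$ to a saturated submanifold, hence a Lie subgroupoid, and $TG_S\tto TS$ (the tangent functor applied to $G_S\tto S$) is a Lie subgroupoid of it.

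The core of the proof is to produce the quotient groupoid $\nu(G_S)\tto\nu(S)$ by descending each structure map of $TG|_{G_S}$. Because $S$ is saturated we have $\s,\t\colon G_S\to S$, so $\d\s,\d\t$ carry $TG_S$ into $TS$ and $TG|_{G_S}$ into $TM|_S$, and therefore descend to maps $\overline{\d\s},\overline{\d\t}\colon\nu(G_S)\to\nu(S)$; these are submersions because $\s,\t$ are. The unit and inversion descend in the same way. The delicate map is the multiplication. Here I would use the canonical identification $T\G2\cong TG\times_{TM}TG$, under which a composable pair of tangent vectors is literally a point of $T\G2$ and $\d m$ is fibrewise linear. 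Given $([\xi],[\eta])$ composable in $\nu(G_S)$, I would first replace $\eta$ within its class by a composable representative---possible since $\d\t\colon T_hG_S\to T_{\t(h)}S$ is onto ($\t\colon G_S\to S$ being a submersion)---and then set $[\xi]\cdot[\eta]:=[\d m(\xi,\eta)]$.

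The step I expect to require the most care, and essentially the only nontrivial point, is the well-definedness of this product. If $\xi'=\xi+a$ and $\eta'=\eta+b$ with $a\in T_gG_S$, $b\in T_hG_S$ and both $(\xi,\eta)$ and $(\xi',\eta')$ composable, then subtracting the two composability relations gives $\d\s(a)=\d\t(b)$, so $(a,b)$ is itself a composable pair lying in $T\G2_S$; since $G_S$ is a subgroupoid, $m(\G2_S)\subseteq G_S$ forces $\d m(a,b)\in T_{gh}G_S$, and by linearity of $\d m$ on $T\G2\cong TG\times_{TM}TG$ we get
\[ \d m(\xi',\eta')-\d m(\xi,\eta)=\d m(a,b)\in T_{gh}G_S, \]
so the product is independent of the choices modulo $TG_S$. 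Once multiplication descends, the groupoid axioms, smoothness, and the submersion property of source and target for $\nu(G_S)\tto\nu(S)$ are inherited from $TG|_{G_S}\tto TM|_S$ by passing to quotients. Finally, the inclusion $TG_S\hookrightarrow TG|_{G_S}$ and the projection $TG|_{G_S}\twoheadrightarrow\nu(G_S)$ are groupoid morphisms by construction, and each row is exact as a sequence of vector bundles by the very definition of the normal bundle; this is exactly the asserted short exact sequence of Lie groupoids.
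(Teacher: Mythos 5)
Your proof is correct, and it supplies more than the paper does: the proposition is stated there with no proof at all, the text passing immediately to the next proposition, which identifies $\nu(G_S)\tto\nu(S)$ with the action groupoid $G_S\ltimes\nu(S)$. That identification is the paper's implicit route to the Lie groupoid structure on the normal bundles --- the action groupoid has manifestly smooth structure maps, and one transports them through the bijection $v_g\mapsto (g,[\d_g\s(v_g)])$; the reference \cite{fdh0} behind these notes obtains the same structure functorially, applying the normal bundle construction to the structure maps of the pair $(G,G_S)\tto (M,S)$. You do something genuinely different and self-contained: you realize $\nu(G_S)\tto\nu(S)$ directly as a quotient of the restricted tangent groupoid $TG|_{G_S}\tto TM|_S$, reducing its existence to saturation of $TM|_S$ in $TG\tto TM$, and your well-definedness argument for the descended multiplication (linearity of $\d m$ on the fibers of $T\G2\cong TG\times_{TM}TG$, together with $\d m(TG_S\times_{TS}TG_S)\subseteq TG_S$) is exactly the crux. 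What your approach buys is that exactness of the top row --- the quotient map being a groupoid morphism with kernel $TG_S$ --- holds by construction, rather than being read off after an isomorphism with the action groupoid is established; what it costs is the smoothness bookkeeping, which you compress into ``inherited by passing to quotients.'' That compression is legitimate but deserves one explicit sentence: the projections $TG|_{G_S}\to\nu(G_S)$ and $T\G2|_{G_S^{(2)}}\to \nu(G_S)\times_{\nu(S)}\nu(G_S)$ are fiberwise-surjective vector bundle maps, hence surjective submersions, and any map descended along a surjective submersion is smooth; fiberwise surjectivity of the second projection is precisely your observation that every composable pair of classes admits composable representatives. Finally, your reading of the diagram with $TG$ and $TM$ restricted over $G_S$ and $S$ is indeed the only parsing under which the rows are exact sequences of vector bundles, so that interpretive step is sound rather than a modification of the statement.
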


There is an alternative description of the groupoid $\nu(G_S)\tto\nu(S)$ which sheds some light on its nature. For each arrow $g:x\rmap y$ in $G_S$ one can define the 
linear transformation: 
\[ T_g:\nu_x(S)\to \nu_y(S), [v] \mapsto [\d_g\t(\tilde v)], \]
where $\tilde v\in T_gG$ is such that $\d_gs(\tilde v)=v$. One checks that this map is independent of the choice of lifting $\widetilde{v}$. Moreover, for any identity arrow one has $T_{1_x}=$id$_{\nu_x(S)}$ and for any pair of composable arrows $(g,h)\in\G2$ one finds:
\[ T_{gh}= T_g\circ T_h. \]
This means that $G_S\tto S$ acts linearly on the normal bundle $\nu(S)\to S$, and one can form the \textbf{action groupoid}:
\[ G_S\ltimes\nu(S)\tto \nu(S). \]
The space of arrows of this groupoid is the fiber product $G_S \times_S \nu(S)$, with source and target maps: $\s(g,v)=v$ and $\t(g,v)=T_g v$.
The product of two composable arrows $(g,v)$ and $(h,w)$ is then given by:
\[ (g,v)(h,w)=(gh,w). \]
One then checks that:

\begin{prop}
The map $\F:\nu(G_S)\to G_S\ltimes\nu(S)$, $v_g\mapsto (g,[\d_g\s(v_g)])$, is an isomorphism of Lie groupoids.
\end{prop}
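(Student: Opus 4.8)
The plan is to show that the proposed map $\F:\nu(G_S)\to G_S\ltimes\nu(S)$ is well-defined, smooth, bijective, and a groupoid morphism, so that it is an isomorphism of Lie groupoids. Recall from the previous proposition that $\nu(G_S)=TG/TG_S$ fibers over $\nu(S)=TM/TS$ via the induced source and target maps. A point of $\nu(G_S)$ sitting over an arrow $g:x\rmap y$ in $G_S$ is a class $v_g=[\tilde v]$ with $\tilde v\in T_gG$, and the formula assigns to it the pair $(g,[\d_g\s(\tilde v)])\in G_S\times_S\nu(S)$, where $[\d_g\s(\tilde v)]\in\nu_x(S)$. First I would check that $\F$ is well-defined: if $\tilde v$ and $\tilde v'$ differ by an element of $T_gG_S$, then $\d_g\s(\tilde v)$ and $\d_g\s(\tilde v')$ differ by an element of $T_xS$, since $\d\s$ maps $TG_S$ into $TS$; hence the class in $\nu_x(S)$ is unchanged. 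Smoothness of $\F$ is then immediate, since it is induced on quotients by the smooth bundle map $\d\s$ and the inclusion recording the base arrow.

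Next I would verify that $\F$ is a morphism of Lie groupoids over the identity on $\nu(S)$. The base map is induced by $\d\s:TG\to TM$ on normal bundles, which by construction of both groupoids is the common source projection, so the underlying object map is $\mathrm{id}_{\nu(S)}$. For the arrow level, I would compute the source and target compatibility: the source of $v_g$ in $\nu(G_S)$ is $[\d_g\s(\tilde v)]$, which matches the source $\s(g,[\d_g\s(\tilde v)])=[\d_g\s(\tilde v)]$ in the action groupoid; for the target one uses precisely the definition $T_g[v]=[\d_g\t(\tilde v)]$ of the linear holonomy action, which shows that the target of $v_g$ equals $T_g$ applied to its source, matching $\t(g,[\d_g\s(\tilde v)])=T_g[\d_g\s(\tilde v)]$. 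Finally, multiplicativity follows by applying the tangent functor to the groupoid multiplication: if $v_g$ and $w_h$ are composable classes, their product in $\nu(G_S)$ is represented by $\d m(\tilde v,\tilde w)$, and chasing the source of this through $\d\s$ (using $\s\circ m=\s\circ\mathrm{pr}_2$) recovers the source of $w_h$, giving $(gh,[\d_h\s(\tilde w)])$, exactly the product rule $(g,v)(h,w)=(gh,w)$.

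It remains to prove bijectivity, for which I would construct the inverse explicitly. Given $(g,[u])\in G_S\times_S\nu(S)$ with $g:x\rmap y$ and $[u]\in\nu_x(S)$, choose any lift $u\in T_xM$ and any $\tilde u\in T_gG$ with $\d_g\s(\tilde u)=u$ (possible since $\s$ is a submersion, so $\d\s$ is fiberwise surjective), and send $(g,[u])$ to $[\tilde u]\in\nu(G_S)$. One checks this is independent of the choices modulo $TG_S$, exactly as in the well-definedness step but run in reverse, and that it is a two-sided inverse to $\F$. The main obstacle, and the only point requiring genuine care, is this independence/surjectivity argument: one must confirm that the ambiguity in lifting $u$ to $\tilde u$ is precisely absorbed by $TG_S$ and $TS$, so that the kernel of $\d\s$ restricted appropriately is accounted for. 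This is where one invokes that $(\s,\t):G\to M\times M$ restricts well to $G_S$ and that the short exact sequences in the previous proposition are compatible; a clean way is to observe that $\F$ fits into a morphism of the two short exact sequences of groupoids, reducing bijectivity on $\nu(G_S)$ to bijectivity on the known pieces $TG_S$ and $\nu(S)$ via the five lemma at the level of the fibered structure.
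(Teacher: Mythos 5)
Your proof is correct and is essentially the argument the paper itself omits: the proposition is prefaced only by ``One then checks that:'', and your direct verification---well-definedness from $\d\s(TG_S)\subset TS$, compatibility with source, with target (which is exactly the definition of the linear action $T_g$), and with multiplication (via $\s\circ m=\s\circ\mathrm{pr}_2$), plus bijectivity from the identity $T_gG_S=(\d_g\s)^{-1}\bigl(T_{\s(g)}S\bigr)$---is precisely the routine check being left to the reader. The only step worth spelling out further is that, before applying $\d m$ in the multiplicativity check, you must adjust the representatives $\tilde v,\tilde w$ (using that $\s|_{G_S}:G_S\to S$ is a submersion) so that $\d_g\s(\tilde v)=\d_h\t(\tilde w)$ holds exactly rather than merely modulo $TS$; this is the same transversality observation that underlies your inverse construction, so nothing new is needed.
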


Finally, let us observe that the restricted groupoid $G_S\tto S$ sits (as the zero section) inside the Lie groupoid $\nu(G_S)\tto\nu(S)$ as a Lie subgroupoid. This justifies introducing the following definition:

\begin{defn}
For a saturated submanifold $S$ of a Lie groupoid $G\tto M$ the local \textbf{linear model} around $S$ is the groupoid $\nu(G_S)\tto\nu(S)$.
\end{defn}

\begin{ex}[Submersions] For the submersion groupoid $G=M\times_N M\tto M$ associated with a submersion $\pi:M\to N$, a fiber $S=\pi^{-1}(z)$ is a saturated submanifold (actually, an orbit) and we find that 
\[ G_S:=\{(x,y)\in M\times M: \pi(x)=\pi(y)=z\}=S\times S\tto S. \]
is the pair groupoid. Notice that the normal bundle $\nu_M(S)$ is naturally isomorphic to the trivial bundle $S\times T_z N$. It follows that the local linear model is the groupoid:
\[ S\times S\times T_z N\tto S\times T_z N, \]
which is the direct product of the pair groupoid $S\times S\to S$ and the identity groupoid $T_zN\tto T_zN$. More importantly, we can view this groupoid as the submersion groupoid of the projection $:S\times T_zN\to T_zN$.
\end{ex}

\begin{ex}[Foliations] Let $\F$ be a foliation of $M$ and let $L\subset M$ be a leaf. The normal bundle $\nu(\F)$ has a natural flat $\F$-connection, which can be described as follows. Given a vector field $Y\in\X(M)$ let us write $\overline{Y}\in\Gamma(\nu(\F))$ for the corresponding section of the normal bundle. Then, if $X\in\X(\F)$ is a foliated vector field, one sets:
\[ \nabla_X\overline{Y}:=\overline{[X,Y]}. \]
One checks that this definition is independent of the choice of representative, and defines a connection $\nabla:\X(\F)\times\Gamma(\nu(\F))\to\Gamma(\nu(\F))$, called the \emph{Bott connection}. The Jacobi identity shows that this connection is flat. 

Given a path $\gamma:I\to L$ in some leaf $L$ of $\F$, parallel transport along $\nabla$ defines a linear map 
\[ \tau_\gamma:\nu_{\gamma(0)}(L)\to \nu_{\gamma(1)}(L). \]
Since the connection is flat, it is clear that this linear map only depends on the homotopy class $[\gamma]$. One obtains a linear action of the fundamental groupoid $\Pi_1(\F)$ on $\nu(\F)$. The restriction of $\Pi_1(\F)$ to the leaf $L$ is the fundamental groupoid of the leaf $L$ and we obtain the linear model for $\Pi_1(\F)$ along the leaf $L$ as the action groupoid:
\[ \Pi_1(L)\ltimes\nu(L)\tto \nu(L). \]
The fundamental groupoid $\Pi_1(L)$ is isomorphic to the gauge goupoid of the universal covering space $\widetilde{L}\to L$, viewed as a principal $\pi_1(L)$-bundle. Moreover, $\nu(L)$ is isomorphic to the associated bundle $\widetilde{L}\times_{\pi_1(L,x)}\nu_x(L)$. It follows that the linear model coincides with the fundamental groupoid of the linear foliation of $\nu(L)=\widetilde{L}\times_{\pi_1(L,x)}\nu_x(L)$.

The linear map $\tau_\gamma$ only depends on the holonomy class of $\gamma$, since this maps coincides with the linearization of the holonomy action along $\gamma$. For this reason, there is a similar description of the linear model of the holonomy groupoid  $\Hol(\F)$ along the leaf $L$ as an action groupoid:
\[ \Hol(\F)_L\ltimes\nu(L)\tto \nu(L). \]
Also, the holonomy groupoid is isomorphic to the gauge goupoid of the holonomy cover $\widetilde{L}^h\to L$, viewed as a principal $\hol(L)$-bundle and we can also describe the normal bundle $\nu(L)$ as the associated bundle $\widetilde{L}^h\times_{\hol(L,x)}\nu_x(L)$. The underlying foliation of this linear model is still the linear foliation of $\nu(L)$. However, the linear model now depends on the germ of the foliation around $L$, i.e., on the non-linear holonomy. Unlike the case of the fundamental groupoid, the knowledge of the Bott connection is not enough to build this linear model.
\end{ex}

\begin{ex}[Group actions] Let $K$ be a Lie group that acts on a manifold $M$ and let $K_x$ be the isotropy group of some $x\in M$. For each $k\in K_x$, the map $\Phi_k: M\to M$, $y\mapsto ky$, fixes $x$ and maps the orbit $\O_x$ to itself. Hence, $\d_x\Phi_k$ induces a linear action of $K_x$ on the normal space $\nu_x(\O_x)$, called the \emph{normal isotropy representation}. Using this representation, it is not hard to check that we have a vector bundle isomorphism:
\[ \xymatrix@R=10pt{\nu(\O_x)\ar[rr]^{\simeq}\ar[dr] & &K\times_{K_x}\nu_x(\O_x)\ar[dl]\\ &\O_x} \]
where the action $K_x\action K\times \nu_x(O_x)$ is given by $k(g,v):=(g k^{-1},kv)$. Moreover, one has an action of $K$ on $\nu(\O_x)$, which under this isomorphism corresponds to the action:
\[ K\action K\times_{K_x}\nu_x(O_x), \quad k [(k',v)]=[(kk',v)]. \]

Now consider the action Lie groupoid $K\ltimes M\to M$. One checks that the local linear model around the orbit $\O_x$ is just the action Lie groupoid $K\ltimes \nu(\O_x)\tto \nu(\O_x)$, which under the isomorphism above corresponds to the action groupoid:
\[ K\ltimes (K\times_{K_x}\nu_x(O_x))\tto K\times_{K_x}\nu_x(O_x). \]
\end{ex}

As we have already mentioned above, the Linearization Theorem states that, under appropriate conditions, the groupoid is locally isomorphic around a saturated submanifold to its local model. In order to make precise the expression ``locally isomorphic'' we introduce the following definition:

\begin{defn}
Let $G\tto M$ be a Lie groupoid and $S\subset M$ a saturated submanifold. A {\bf groupoid neighborhood} of $G_S\tto S$ is a pair of open sets $U\supset S$ and $\widetilde{U}\supset G_S$ such that $\widetilde{U}\tto U$ is a subgroupoid of $G\tto M$. A groupoid neighborhood $\widetilde{U}\tto U$ is said to be \textbf{full} if $\widetilde{U}=G_U$.
\end{defn}

Our first version of the linearization theorem reads as follows:

\begin{thm}[Weak linearization]
\label{thm:weak:linear}
Let $G\tto M$ be a Lie groupoid with a 2-metric $\e2$. Then $G$ is \textbf{weakly linearizable} around any saturated submanifold $S\subset M$: there are groupoid neighborhoods $\widetilde{U}\tto U$ of $G_S\tto S$ in $G\tto M$ and $\widetilde{V}\tto V$ of $G_S\to S$ in the local model $\nu(G_S)\tto\nu(S)$, and an isomorphism of Lie groupoids:
\[ (\widetilde{U}\tto U)\overset\phi\cong (\widetilde{V}\tto V), \]
which is the identity on $G_S$.
\end{thm}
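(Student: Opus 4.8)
The plan is to produce the linearizing isomorphism as the inverse of a normal exponential map, using that a $2$-metric turns every structure map of the nerve into a Riemannian submersion. First I would unwind what $\e2$ gives: it is an $S_3$-invariant metric on $\G2$ for which the face maps $\G2\to G$ are Riemannian submersions, and it induces metrics $\e1$ on $G$ and $\e0$ on $M$ for which $\s,\t:G\to M$ are Riemannian submersions as well. The geometric heart of the proof is then the following elementary lemma: if $q:(\widetilde M,\tilde g)\to(M,g)$ is a Riemannian submersion and $\widetilde S=q^{-1}(S)$ for an embedded submanifold $S\subset M$, then $q$ intertwines the normal exponential maps, $q\circ\exp^{\perp}_{\widetilde S}=\exp^{\perp}_{S}\circ\,\nu(q)$, where $\nu(q):\nu(\widetilde S)\to\nu(S)$ is the map induced on normal bundles. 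Indeed, since $\ker\d q\subset T\widetilde S$, any vector orthogonal to $\widetilde S$ is horizontal, and a geodesic with horizontal initial velocity stays horizontal and projects to a geodesic downstairs, which is exactly the claimed identity.

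Next I would construct the map itself. Using $\e1$ and $\e0$, set $\psi:=\exp^{\perp}_{G_S}$ on a neighborhood of the zero section of $\nu(G_S)$ (landing in $G$) and $f:=\exp^{\perp}_{S}$ on a neighborhood of the zero section of $\nu(S)$ (landing in $M$); the desired isomorphism will be $\phi:=\psi^{-1}$. Because $S$ is saturated we have $G_S=\s^{-1}(S)=\t^{-1}(S)$, so the lemma applies verbatim to $q=\s$ and to $q=\t$ and shows that $\psi$ intertwines source and target with the corresponding structure maps $\nu(\s),\nu(\t)$ of the linear model $\nu(G_S)\tto\nu(S)$. Moreover $\exp^\perp$ is the identity along the zero section, so $\psi$ and $f$ restrict to the identity on $G_S$ and on $S$.

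For multiplicativity I would run the same construction one simplicial degree higher. Using saturation of $S$ again, one checks that each of the three face maps $d_0,d_1,d_2:\G2\to G$ (the two factor projections and the multiplication) satisfies $d_i^{-1}(G_S)=\G2_S$. Since all three are Riemannian submersions for the pair $(\e2,\e1)$, the lemma yields $d_i\circ\exp^{\perp}_{\G2_S}=\psi\circ\nu(d_i)$ for $i=0,1,2$. By functoriality of the normal-bundle construction (the short exact sequence of Lie groupoids recalled above, applied at each simplicial degree) the maps $\nu(d_i)$ are exactly the face maps of the nerve of the linear model, whose space of composable arrows is identified with $\nu(\G2_S)$. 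Hence $\psi$ commutes with every face map of the two nerves; writing the product of a composable pair as the middle face of the corresponding element of $\G2_S$ gives $\psi(\bar g\cdot\bar h)=\psi(\bar g)\cdot\psi(\bar h)$, so $\psi$ is a morphism of Lie groupoids (preservation of units and inverses is then automatic for a bijective morphism intertwining source and target).

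It remains to extract honest groupoid neighborhoods. Since the linear model is the action groupoid $G_S\ltimes\nu(S)$, groupoid neighborhoods $\widetilde V\tto V$ of $G_S\tto S$ are easy to produce and form a neighborhood basis; choosing $\widetilde V$ small enough that $\psi$ is a diffeomorphism there, the images $\widetilde U:=\psi(\widetilde V)$ and $U:=f(V)$ automatically form a groupoid neighborhood of $G_S\tto S$ in $G\tto M$, and $\phi=\psi^{-1}$ is the required isomorphism. The main obstacle, to my mind, is precisely this final bookkeeping: one must choose the normal neighborhoods at simplicial degrees $0,1,2$ \emph{compatibly}, so that the three intertwining identities all hold on a common domain and the chosen sets genuinely cut out subgroupoids. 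This is delicate because $S$ is not assumed compact, so the tubular radius cannot be taken uniform. By contrast, the role of the $2$-metric (as opposed to a mere $1$-metric) is confined to the multiplicativity step: it is exactly what makes $\G2\to G$ a Riemannian submersion and thereby forces $\psi$ to respect the groupoid product.
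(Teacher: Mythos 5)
Your proposal is correct and takes essentially the same route as the paper: the paper's proof likewise uses the normal exponential maps of the induced metrics $\e0,\e1,\e2$ and the fact that the face maps are Riemannian submersions (your key lemma) to get a commuting diagram of exponentials at simplicial degrees $0,1,2$, with the paper's choice $\widetilde V=(\d\s)^{-1}(V)\cap(\d\t)^{-1}(V)\cap\operatorname{Dom}(\exp_{\e1})$ playing exactly the role of your compatibly chosen domains. The only cosmetic difference is the direction of the isomorphism --- the paper uses $\exp$ itself, carrying a groupoid neighborhood in the linear model onto one in $G$, whereas you take $\phi=\psi^{-1}$.
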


A 2-metric is a special Riemannian metric in the space of composable arrows $\G2$. We will discuss them in detail in the next lecture. For now we remark that for a \emph{proper} groupoid 2-metrics exist and, moreover, every groupoid neighborhood contains a full groupoid neighborhood. Hence:

\begin{cor}[Linearization of proper groupoids]
Let $G\tto M$ be a proper Lie groupoid. Then $G$ is \textbf{linearizable} around any saturated submanifold $S\subset M$: there exist open neighborhoods $S\subset U\subset M$ and $S\subset V\subset \nu(S)$ and an isomorphism of Lie groupoids
\[ (G_U\tto U)\overset\phi\cong (\nu(G_S)_V\tto V), \]
which is the identity on $G_S$.
\end{cor}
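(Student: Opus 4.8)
The plan is to deduce the corollary from the Weak Linearization Theorem together with the two facts about proper groupoids recorded in the remark: that a proper groupoid admits a 2-metric, and that every groupoid neighborhood of $G_S\tto S$ contains a full one. Since $G$ is proper it carries a 2-metric $\e2$, so Theorem~\ref{thm:weak:linear} applies and yields groupoid neighborhoods $\widetilde U\tto U$ of $G_S\tto S$ in $G\tto M$ and $\widetilde V\tto V$ of $G_S\tto S$ in the linear model $\nu(G_S)\tto\nu(S)$, together with an isomorphism $\phi:(\widetilde U\tto U)\to(\widetilde V\tto V)$ that is the identity on $G_S$. Write $\phi_0:U\to V$ for the diffeomorphism of bases covered by $\phi$. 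The only remaining task is to shrink these neighborhoods to \emph{full} ones, in a way compatible with $\phi$.

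Next I would make both neighborhoods full. The full-neighborhood property applied to $G$ gives an open set $U_1$ with $S\subset U_1\subset U$ and $G_{U_1}\subset\widetilde U$. To do the same on the model side I first observe that the linear model is itself proper: under the identification $\nu(G_S)\cong G_S\ltimes\nu(S)$ its anchor is $(g,v)\mapsto(v,T_g v)$, and the preimage of a compact box $C_1\times C_2\subset\nu(S)\times\nu(S)$ is a closed subset of $Q\times C_1$, where $Q=(\s,\t)^{-1}(\pi(C_1)\times\pi(C_2))$ is compact by properness of $G_S$; hence it is compact. Thus the full-neighborhood property also applies to $\nu(G_S)$ and produces an open $V_2$ with $S\subset V_2\subset V$ and $\nu(G_S)_{V_2}\subset\widetilde V$.

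Finally I would match the two sides. Set $U':=U_1\cap\phi_0^{-1}(V_2)$ and $V':=\phi_0(U')$, so that $S\subset U'\subset U_1$ and $S\subset V'\subset V_2$, whence $G_{U'}\subset\widetilde U$ and $\nu(G_S)_{V'}\subset\widetilde V$. The key point is that $\phi$ carries $G_{U'}$ exactly onto $\nu(G_S)_{V'}$. Indeed, since $\phi$ covers $\phi_0$ it sends every $g\in\widetilde U$ with $\s(g),\t(g)\in U'$ to an arrow with source and target in $V'$; conversely any $h\in\widetilde V$ with $\s(h),\t(h)\in V'$ has $\phi^{-1}(h)\in\widetilde U$ with source and target in $U'$, so $\phi^{-1}(h)\in G_{U'}$ precisely because $G_{U'}\subset\widetilde U$. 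Hence $\phi(G_{U'})=\nu(G_S)_{V'}\cap\widetilde V=\nu(G_S)_{V'}$, using $\nu(G_S)_{V'}\subset\widetilde V$. Therefore $\phi$ restricts to an isomorphism $(G_{U'}\tto U')\cong(\nu(G_S)_{V'}\tto V')$ of \emph{full} neighborhoods, still the identity on $G_S$; renaming $U',V'$ as $U,V$ gives the statement.

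The real difficulty is concentrated in the two inputs being invoked from the remark, both deferred to the next lecture. The existence of 2-metrics on proper groupoids is one of them. The other, and the place where properness does the essential work, is the claim that every groupoid neighborhood contains a full one: here one uses that the anchor $(\s,\t):G\to M\times M$ is proper, hence closed, so that the image $(\s,\t)(G\setminus\widetilde U)$ is closed and disjoint from $S\times S$, and a standard argument then yields an open $U'\supset S$ with $(U'\times U')\cap(\s,\t)(G\setminus\widetilde U)=\emptyset$, i.e. $G_{U'}\subset\widetilde U$. Granting these, the corollary is a routine matter of the bookkeeping above; without properness both inputs, and hence the conclusion, genuinely fail.
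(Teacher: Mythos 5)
Your proof is correct and takes essentially the same route as the paper, which deduces the corollary from the Weak Linearization Theorem together with the two stated facts about proper groupoids (existence of a 2-metric, and that every groupoid neighborhood contains a full one). The paper compresses all the bookkeeping into a single ``Hence''; your matching of the two sides via $\phi$ --- including the observation that the linear model $G_S\ltimes\nu(S)$ is itself proper, so the full-neighborhood property can be invoked on the model side as well --- correctly supplies the details the paper leaves implicit.
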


This corollary does not yet yield the various linearization results stated in Lecture 1. The reason is that the assumptions do not guarantee the existence of a saturated neighborhood $S\subset U\subset M$. This can be realized for s-proper groupoids, since for such groupoids every neighborhood $U$ of a saturated embedded submanifold $S$, contains a saturated neighborhood of $S$.

\begin{cor}[Invariant linearization of s-proper groupoids]
Let $G\tto M$ be an s-proper Lie groupoid. Then $G$ is \textbf{invariantly linearizable} around any saturated submanifold $S\subset M$: there exist saturated open neighborhoods $S\subset U\subset M$ and $S\subset V\subset \nu(S)$ and an isomorphism of Lie groupoids
\[ (G_U\tto U)\overset\phi\cong (\nu(G_S)_V\tto V),\]
which is the identity on $G_S$.
\end{cor}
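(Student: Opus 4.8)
The plan is to deduce this from the Linearization of proper groupoids corollary by shrinking the resulting neighborhoods to saturated ones. First I would observe that an s-proper groupoid is in particular proper: if $\s$ is proper and $K\subset M\times M$ is compact, then $(\s,\t)^{-1}(K)$ is contained in the compact set $\s^{-1}(\mathrm{pr}_1(K))$ and is closed in $G$, hence compact. Therefore the proper corollary applies and produces open neighborhoods $S\subset U_0\subset M$ and $S\subset V_0\subset\nu(S)$ together with an isomorphism $\phi_0\colon (G_{U_0}\tto U_0)\cong(\nu(G_S)_{V_0}\tto V_0)$ that is the identity on $G_S$; write $f_0\colon U_0\to V_0$ for the underlying diffeomorphism of objects, which is the identity on $S$.

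Next I would record that the linear model is again s-proper. Since $S$ is saturated, the source fibre of $G_S$ over $x\in S$ is $\s^{-1}(x)\cap\t^{-1}(S)=\s^{-1}(x)$, so it coincides with the compact source fibre of $G$; hence $G_S$ is s-proper, and under the isomorphism $\nu(G_S)\cong G_S\ltimes\nu(S)$ the source fibre of the action groupoid over $v\in\nu_x(S)$ is again a copy of $\s^{-1}(x)$, so $\nu(G_S)\tto\nu(S)$ is s-proper as well. Now I would invoke, on both sides, the feature of s-proper groupoids quoted just before the statement—that every neighborhood of a saturated embedded submanifold contains a saturated one—to choose a $G$-saturated open $S\subset U_1\subset U_0$ in $M$ and a $\nu(G_S)$-saturated open $S\subset V_1\subset V_0$ in $\nu(S)$.

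The main point, and the step I expect to require the most care, is that saturation with respect to the restricted groupoids $G_{U_0}$ and $\nu(G_S)_{V_0}$ is a priori weaker than saturation with respect to the ambient groupoids, so $U_1$ and $f_0(U_1)$ need not correspond under $\phi_0$. I would get around this by intersecting: set
\[ U:=U_1\cap f_0^{-1}(V_1),\qquad V:=f_0(U)=f_0(U_1)\cap V_1. \]
To see that $U$ is $G$-saturated, take $x\in U$; since $U_1$ is saturated, its orbit satisfies $\O_x\subset U_1\subset U_0$, so $\O_x$ is simultaneously a full $G$-orbit and a $G_{U_0}$-orbit, whence $f_0(\O_x)$ is the $\nu(G_S)_{V_0}$-orbit of $f_0(x)$. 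As $f_0(x)\in V_1$ and $V_1$ is $\nu(G_S)$-saturated, $f_0(\O_x)$ is contained in the full $\nu(G_S)$-orbit of $f_0(x)$, which lies in $V_1$; this gives $\O_x\subset f_0^{-1}(V_1)$ and hence $\O_x\subset U$. The symmetric argument, using that $V_1$ is $\nu(G_S)$-saturated and that $f_0(U_1)$ is a union of $\nu(G_S)_{V_0}$-orbits, shows that $V$ is $\nu(G_S)$-saturated.

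Finally I would restrict $\phi_0$. Because $U$ is saturated, $G_U=\s^{-1}(U)=\t^{-1}(U)$, and since $U\subset U_0$ and $V=f_0(U)\subset V_0$ one checks directly that $\phi_0$ carries $G_U$ isomorphically onto $\nu(G_S)_V$. As $S\subset U$ (here one uses $f_0|_S=\mathrm{id}$ and $S\subset V_1$) and $\phi_0$ is the identity on $G_S$, the restriction $\phi:=\phi_0|_{G_U}$ is an isomorphism $(G_U\tto U)\cong(\nu(G_S)_V\tto V)$ which is the identity on $G_S$, with $U$ and $V$ saturated as required.
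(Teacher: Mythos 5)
Your proof is correct and follows essentially the same route the paper intends: deduce the result from the linearization corollary for proper groupoids (after noting s-proper implies proper) and then shrink to saturated neighborhoods using the fact, quoted just before the statement, that for s-proper groupoids every neighborhood of a saturated embedded submanifold contains a saturated one. The one genuine subtlety you identify and resolve---that the saturated shrinkings on the two sides need not correspond under the isomorphism, fixed by taking $U := U_1\cap f_0^{-1}(V_1)$ and $V := f_0(U)$---is a detail the paper leaves implicit, and your treatment of it (including the verification that the linear model $\nu(G_S)\tto\nu(S)$ is itself s-proper) is correct.
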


\begin{ex}
For a proper submersion $\pi:M\to N$ the associated submersion groupoid $M\times_N M\tto M$ is s-proper. Using the description of the local model that we gave before, it follows that for any fiber $\pi^{-1}(z)$ there is a saturated open neighborhood $\pi^{-1}(z)\subset U\subset M$ where the submersion if locally isomorphic to the trivial submersion $\pi^{-1}(z)\times V\to V$, for some open neighborhood $0\in V\subset T_z N$. Hence, we recover the classical Ehresmann's Theorem.
\end{ex}

\begin{ex}
Let $\F$ be a foliation of $M$ whose leaves are compact with finite holonomy. Then the holonomy groupoid $\Hol(\F)\tto M$ is s-proper. Using the description of the local model given before, it follows that for any leaf $L$ there is a saturated neighborhood $L\subset U\subset M$ where the canonical foliation is isomorphic to the linear foliation of $\nu(L)=\widetilde{L}^h\times_{\hol(L,x)} V$, for some $\hol(L,x)$-invariant, neighborhood $0\in V\subset \nu_x(L)$. Hence, we recover the Local Reeb Stability Theorem.
\end{ex}

\begin{ex}
Let $K\times M\to M$ be an action of a \emph{compact} Lie group. Then the the action groupoid is $s$-proper and we obtain invariant linearization. From the description of the local model, it follows that for any orbit $\O_x$ there is a saturated open neighborhood $\O_x\subset U\subset M$ and a $K$-equivariant isomorphism $U\simeq K\times_{K_x} V$, where $0\in V\subset \nu_x(\O_x)$ is a $K_x$-invariant neighborhood. Hence, we recover the slice theorem for actions of compact groups. For a general proper action, the results above only give weak linearization, which does not allow to deduce the slice theorem. However, due to the particular structure of the action groupoid, every orbit has a saturated neighborhood and one has a uniform bound for the injectivity radius of the 2-metric. This gives invariant linearization and leads to the Slice Theorem for \emph{any} proper action.
\end{ex}

\begin{ex}
Let $(M,\pi)$ be a Poisson manifold. The cotangent bundle $T^*M$ has a natural Lie algebroid structure with anchor $\rho:T^*M\to TM$ given by contraction by $\pi$ and Lie bracket on sections (i.e., 1-forms):
\[ [\al,\be]:=\Lie_{\rho(\al)}\be-\Lie_{\rho(\be)}\al-\d \pi(\al,\be). \]
In general, this Lie algebroid fails to be integrable. However,  under the assumptions of the local normal form theorem stated in Lecture 1, this groupoid is integrable, and then its source 1-connected integration is an s-proper Lie groupoid whose orbits are the symplectic leaves. This groupoid can then be linearized around a symplectic leaf, but this linearization \emph{does not} yet yield the local canonical form for the Poisson structure. 

It turns out that the source 1-connected integration is a symplectic Lie groupoid, i.e., there is a symplectic structure on its space of arrows which is compatible with multiplication. One can apply a Moser type trick to further bring the symplectic structure on the local normal form to a canonical form, which then yields the canonical form of the Poisson structure. The details of this approach, which differ from the original proof of the canonical form due to Crainic and Marcut, can be found in \cite{cm}.
\end{ex}

\addtocounter{section}{1}
\section*{Lecture 4: Groupoid Metrics and Linearization}

Let us recall that a submersion $\pi:(M,\eta)\to N$ is called a \textbf{Riemannian submersion} if the fibers are equidistant. The base $N$ gets an induced metric $\pi_*\eta$ for which the linear maps 
$\d_x\pi:(\ker\d_x\pi)^\perp \to T_{\pi(x)} N$, $x\in M$, are all isometries. More generally, a (possibly singular) foliation in a Riemannian manifold $M$ is called a \textbf{Riemannian foliation} if the leaves are equidistant. This is equivalent to the following property: any geodesic which is perpendicular to one leaf at some point stays perpendicular to all leaves that it intersects. 

A simple proof of Ereshman's Theorem can be obtained by choosing a metric on the total space of the submersion $\pi:M\to N$, that makes it into a Riemannian submersion. A partition of unit argument shows that this is always possible. Then the linearization map is just the exponential map of the normal bundle of a fiber, which maps onto a saturated neighborhood of the fiber, provided the submersion is proper. We will see that a proof similar in spirit also works for the general linearization theorem (Theorem \ref{thm:weak:linear}). 

Let $G\tto M$ be a Lie groupoid. Like any category, $G$ has a \textbf{simplicial model}:
\[ 
\xymatrix@1{ \dots \ar@<0.60pc>[r]\ar@<0.30pc>[r] \ar[r] \ar@<-0.30pc>[r]\ar@<-0.60pc>[r]& \G{n}  \ar@<0.45pc>[r]\ar@<0.15pc>[r]\ar@<-0.15pc>[r]\ar@<-0.45pc>[r] &\cdots  \ar@<0.45pc>[r]\ar@<0.15pc>[r]\ar@<-0.15pc>[r]\ar@<-0.45pc>[r]&\G2 \ar@<0.30pc>[r] \ar[r] \ar@<-0.30pc>[r]& \G1 \ar@<0.2pc>[r]\ar@<-0.2pc>[r] & \G0\ [\ar[r]& \G0/\G1]}
\] 
where, for each $n\in\Nn$, the \textbf{face maps} $\eps_i:\G{n}\to\G{n-1}$, $i=0,\dots,n$ and the \textbf{degeneracy maps} $\delta_i:\G{n}\to\G{n+1}$, $i=1,\dots,n$, are defined as follows: for a $n$-string of composable arrows the $i$-th face map associates the $(n-1)$-string of composable arrows obtained by omitting the $i$-object:
\[ \eps_i\left( \xymatrix{\cdot & \ar@/_/[l]_{g_1} \cdot \ar@{..}[r]&\cdot & \ar@/_/[l]_{g_n} \cdot }\right)=
\xymatrix{\cdot & \ar@/_/[l]_{g_1} \cdot \ar@{..}[r]& &\ar@/_/[l]_{g_ig_{i+1}} \cdot \ar@{..}[r]&\cdot & \ar@/_/[l]_{g_n} \cdot }\]
while the $i$-th degeneracy map inserts into a $n$-string of composable arrows an identity at the $i$-th entry:
\[ \delta_i\left( \xymatrix{\cdot & \ar@/_/[l]_{g_1} \cdot \ar@{..}[r]&\cdot & \ar@/_/[l]_{g_n} \cdot }\right)=
\xymatrix{\cdot & \ar@/_/[l]_{g_1} \cdot \ar@{..}[r]&\cdot\ar@(ur,ul)[]_{1_{x_i}} \ar@{..}[r]&\cdot & \ar@/_/[l]_{g_n} \cdot }\]
For a Lie groupoid there is, additionally, a natural action of $S_{n+1}$ on $\G{n}$: for a string of n-composable arrows $(g_1,\dots,g_n)$ choose $(n+1)$ arrows $(h_0,\dots,h_n)$, 
all with the same source, so that:
\[ \xymatrix{
  & &  & \cdot \ar@/_/[dlll]_{h_0\ \ }^{h_1\ \ \ }\ar@/_/[dll]^{h_2}  \ar@/_/[dl]^{\ \ \ \cdots} \ar@/_/[dr]^{h_{n-1}}  \ar@/^/[drr]^{h_n}\\
\cdot & \ar@/_/[l]^{g_1} \cdot &\ar@/_/[l]^{g_2}\cdot \ar@{..}[rr]& &\cdot & \ar@/_/[l]^{g_n} \cdot} \]
Then the $S_{n+1}$-action on the arrows $(h_0,\dots,h_n)$ by permutation gives a well defined $S_{n+1}$-action on $\G{n}$. Notice that this action permutes the face maps $\eps_i$, since there are maps $\phi_i:S_{n+1}\to S_n$ such that:
\[ \eps_i\circ \sigma=\phi_i(\sigma)\circ \eps_{\sigma(i)}. \]

\begin{defn}
A \textbf{$n$-metric} ($n\in\Nn$) on a groupoid $G\to M$ is a Riemannian metric $\e{n}$ on $\G{n}$ which is $S_{n+1}$-invariant and for which all the face maps  $\eps_i:\G{n}\to\G{n-1}$ are Riemmanian submersions.
\end{defn}

Actually, it is enough in this definition to ask that one of the face maps is a Riemannian submersion: the assumption that the action of $S_{n+1}$ is by isometries implies that if one face map is a Riemannian submersion then all the face maps are Riemannian submersions. 

For any $n\ge 1$, the metrics induced on $\G{n-1}$ by the different face maps $\eps_i:\G{n}\to\G{n-1}$ coincide, giving a well defined metric $\e{n-1}$, which is a $(n-1)$-metric. Obviously, one can repeat this process, so that a $n$-metric on $\G{n}$ determines a $k$-metric on $\G{k}$ for all $0\le k\le n$.  

\begin{ex}[0-metrics]
When $n=0$ we adopt the convention that $\e0$ is a metric on $M=\G0$ which makes the orbit space a Riemannian foliation and is invariant under the action of $G$ on the normal space to the orbits, i.e., such that each arrow $T_g$ acts by isometries on $\nu(\O)$. Such 0-metrics were studied by Pflaum, Posthuma and Tang in \cite{ppt}, in the case of proper groupoids. One can think of such metrics as determining a metric on the (possibly singular) orbit space $M/G$. Indeed, it is proved in \cite{ppt} that the distance on the orbit space determined by $\e0$ enjoys some nice properties.
\end{ex}

\begin{ex}[1-metrics]
A 1-metric is just a metric $\e1$ on the space of arrows $G=\G1$ for which the source and target maps are Riemannian submersions and inversion is an isometry. These metrics were studied by Gallego \emph{et al.} in \cite{gghr}. A 1-metric induces a 0-metric $\e0$ on $M=\G0$, for which the orbit foliation is Riemannian. 
\end{ex}

\begin{ex}[2-metrics]
A $2$-metric is a metric $\e2$ in the space of composable arrows $\G2$, which is invariant under the $S_3$-action generated by the involution $(g_1,g_2)\mapsto (g_2^{-1},g^{-1}_1)$ and the 3-cycle  $(g_1,g_2)\mapsto ((g_1g_2)^{-1},g_1)$, and for which multiplication is a Riemannian submersion. Hence, a 2-metric on $\G2$ induces a 1-metric on $\G1$. These metrics were introduced in \cite{fdh0}. 
\end{ex}

Notice that the first 3 stages of the nerve 
\[ \xymatrix@1{ \G2 \ar@<0.5pc>[r]^{\pi_1} \ar[r]|m \ar@<-0.5pc>[r]_{\pi_2} & G \ar@<0.25pc>[r]^s \ar@<-0.25pc>[r]_t & M} \]
completely determine the remaining $\G{n}$, for $n\ge 3$. Hence, one should expect that $n$-metrics, for $n\ge 3$, are determined by their $2$-metrics. In fact, one has the following properties, whose proof can be found in \cite{fdh1,fdh2}:
\begin{itemize}
\item there is at most one 3-metric inducing a given 2-metric and every 3-metric has a unique extension to an $n$-metric for every $n\ge 3$.
\item there are examples of groupoids which admit an $n$-metric, but do not admit a $n+1$-metric, for $n=0,1,2$. 
\item uniqueness fails in low degrees: one can have, e.g., two different 2-metrics on $\G2$ inducing the same 1-metric on $\G1$.
\end{itemize}

The geometric realization of the nerve of a groupoid $G\tto M$ is usually denoted by $BG$, can be seen as the classifying space of principal $G$-bundles (see \cite{h}). Two Morita equivalent groupoids $G_1\tto M_1$ and $G_2\tto M_2$ (see \cite{survey} or \cite{mm}), give rise to homotopy equivalent spaces $BG_1$ and $BG_2$. An alternative point of view, is to think of $BG$ as a geometric stack with atlas $G\tto M$, and two atlas represent the same stack if they are Morita equivalent groupoids. The following result shows that one may think of a $n$-metric as a metric in $BG$:

\begin{prop}[\cite{fdh1,fdh2}]
If $G\tto M$ and $H\tto N$ are Morita equivalent groupoids, then $G$ admits a $n$-metric if and only if $H$ admits a $n$-metric.
\end{prop}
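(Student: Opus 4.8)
The plan is to establish Morita invariance of the existence of $n$-metrics by showing that a Morita equivalence between $G\tto M$ and $H\tto N$ allows one to transport an $n$-metric from the nerve of one groupoid to the nerve of the other. The key observation is that a Morita equivalence can be witnessed by a third groupoid together with maps to both $G$ and $H$ that are \emph{weak equivalences} (fully faithful, essentially surjective smooth functors, i.e., Morita morphisms). Since the $n$-metric lives on $\G{n}$ and is characterized by a symmetry property (the $S_{n+1}$-invariance) and a submersion property (the face maps being Riemannian submersions), I would try to pull back and push forward metrics along such weak equivalences stage-by-stage in the simplicial model. So the first step is to reduce the general Morita equivalence to the case of a single weak equivalence $\F:G\to H$ covering a surjective submersion (or \'etale map) on objects, using that any Morita equivalence factors through such morphisms.

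\smallskip

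First I would recall that a weak equivalence $\F:G\to H$ induces, for each $n$, a map $\F^{(n)}:\G{n}\to H^{(n)}$ on the nerves, and these maps are compatible with all face and degeneracy maps and with the $S_{n+1}$-actions. The crucial geometric point is that because $\F$ is fully faithful, the square
\[
\xymatrix{
G \ar[r]^{\F} \ar[d]_{(\s,\t)} & H \ar[d]^{(\s,\t)}\\
M\times M \ar[r]_{f\times f} & N\times N
}
\]
is a fiber product, so $\F^{(n)}$ is itself a (surjective, when $\F$ is essentially surjective) submersion whose fibers are controlled by the map $f:M\to N$ on objects. The plan is then: given an $n$-metric $\e{n}_H$ on $H^{(n)}$, pull it back along $\F^{(n)}$ to obtain a metric on $\G{n}$; one must check that this pullback is $S_{n+1}$-invariant (immediate from naturality of $\F^{(n)}$ with respect to the symmetric group action) and that the face maps remain Riemannian submersions. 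For the converse direction, given an $n$-metric on $\G{n}$, one averages and pushes forward along the submersion $\F^{(n)}$, using that the fibers are themselves controlled by a submersion so that a transverse metric descends.

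\smallskip

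I expect the main obstacle to be the \textbf{transfer of the Riemannian submersion condition} on the face maps across $\F^{(n)}$, rather than just the transfer of a plain metric. Being a Riemannian submersion is not preserved by arbitrary pullback of metrics; one genuinely needs the compatibility of the commuting square relating $\eps_i$ on $H^{(n)}$ with $\eps_i$ on $\G{n}$ to be a fiber-product-like diagram, so that horizontal and vertical distributions match up correctly. The delicate issue is that to \emph{produce} an $n$-metric on the target from one on the source (the harder direction of the "if and only if"), a straightforward pullback will not make the face maps Riemannian submersions; instead one must build the metric fiberwise over the base submersion $f:M\to N$ and glue using a partition-of-unity/averaging argument, while simultaneously respecting the $S_{n+1}$-symmetry. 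Handling the symmetrization and the submersion condition at once --- so that averaging over $S_{n+1}$ does not destroy the Riemannian submersion property of the individual face maps --- is where the real work lies, and it is precisely here that the remark following the definition (that $S_{n+1}$-invariance together with \emph{one} face map being a Riemannian submersion forces all of them to be) should be exploited to reduce the amount that must be checked by hand.
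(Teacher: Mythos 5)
First, a point of comparison: the paper does not actually prove this proposition — it states it, cites \cite{fdh1,fdh2}, and defers entirely to those references, remarking only that an $n$-metric can be ``transported'' along a Morita equivalence, that the transport \emph{depends on choices}, and that the transversal component is preserved. So your proposal must be judged against that intended construction, and it diverges from it at a step that genuinely fails. The fatal step is the transfer from $H$ to $G$: you propose to \emph{pull back} $\e{n}$ along $\F^{(n)}:\G{n}\to H^{(n)}$. But in any nontrivial Morita equivalence $\F^{(n)}$ is a submersion with positive-dimensional fibers, and the pullback of a Riemannian metric along such a map is a degenerate symmetric $2$-tensor (it vanishes on $\ker \d\F^{(n)}$), not a metric. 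Concretely, the pair groupoid $M\times M\tto M$ is Morita equivalent to the unit groupoid of a point; pulling back the metric on the point's nerve gives the zero tensor on $(M\times M)^{(n)}\cong M^{n+1}$. The correct move is not a pullback but a \emph{lift}: choose a horizontal distribution for $\F^{(n)}$ together with metrics on its fibers, compatibly with the simplicial and $S_{n+1}$-structure, and declare $\F^{(n)}$ to be a Riemannian submersion over the given metric. These auxiliary data are precisely the ``choices'' the paper alludes to, and once pullback is replaced by a lift, your claim that $S_{n+1}$-invariance is ``immediate from naturality'' evaporates — it has to be arranged when making the choices.

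The opposite direction as you describe it also misfires. The fibers of $\F^{(n)}$ over a weak equivalence are in general non-compact (for the pair groupoid over a point they are all of $M^{n+1}$), so there is no group or measure to ``average'' over, and gluing by a partition of unity destroys exactly the property you need to preserve, namely that the face maps are Riemannian submersions — a convex combination of metrics each making a fixed map a Riemannian submersion need not make it one. What makes descent work in \cite{fdh1,fdh2} is that no averaging is needed at all: the invariance built into the definition of an $n$-metric forces its transverse component to be basic (projectable) along a surjective submersive weak equivalence, so it pushes forward canonically; this is the content of the paper's remark that the transversal component is preserved. Finally, note that your reduction to a single morphism needs care: a weak equivalence between Morita equivalent groupoids need not exist as a map $G\to H$, and even when it does it need not be submersive (the inclusion of a full transversal subgroupoid is an immersive weak equivalence). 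One must pass through a third groupoid mapping onto both $G$ and $H$ by surjective submersive weak equivalences — which you gesture at — but then \emph{both} mechanisms, the lift with choices and the canonical descent, are required on each leg, so neither of the two gaps above can be sidestepped.
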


In fact, it is shown in \cite{fdh1,fdh2} that it is possible to ``transport'' a $n$-metric via a Morita equivalence. This constructions depends on some choices, but the \emph{transversal component} of the $n$-metric is preserved. We refer to those references for a proof and more details.

Since a $n$-metric determines a $k$-metric, for all $0\le k\le n$, a necessary condition for a groupoid $G\tto M$ to admit a $n$-metric is that the orbit foliation can be made Riemannian. This places already some strong restrictions on the class of groupoids admitting a $n$-metric. So one may wonder when can one find such metrics. One important result in this direction is that any proper groupoid admits such a metric:

\begin{thm}
A proper Lie groupoid $G\tto M$ admits $n$-metrics for all $n\ge 0$.
\end{thm}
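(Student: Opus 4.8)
The plan is to construct an $n$-metric directly on $\G{n}$ for each $n$ by an averaging argument, exploiting properness exactly as one uses compactness of a group to produce invariant metrics. The essential tool provided by properness is a \emph{Haar system} $\{\mu^x\}_{x\in M}$ on the source fibers $\s^{-1}(x)$ together with a \emph{cutoff function} $c\in C^\infty(M)$, normalized so that $\int_{\s^{-1}(x)} c(\t(g))\,\d\mu^x(g)=1$ for all $x$. Properness of the anchor $(\s,\t):G\to M\times M$ is what guarantees that such a normalized cutoff can be chosen (the relevant supports are proper over $M$, so the integrals are effectively over compact sets and converge), and it forces the isotropy groups to be compact. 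This is the groupoid analogue of normalized Haar measure on a compact Lie group, and it is what lets us average tensors over the groupoid.

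First I would dispose of the finite symmetry: starting from any auxiliary Riemannian metric on $\G{n}$, averaging over the finite group $S_{n+1}$ (which acts by diffeomorphisms permuting the $n+1$ objects of a string, and which for $n=1$ is generated by inversion) produces an $S_{n+1}$-invariant metric with no convergence issues. The delicate requirement is the remaining one, that every face map $\eps_i:\G{n}\to\G{n-1}$ be a Riemannian submersion. Since the $S_{n+1}$-action permutes the face maps, once the metric is $S_{n+1}$-invariant it suffices in principle to arrange that a single face map is a Riemannian submersion and let the symmetry propagate the property to all of them. Recall that a submersion is Riemannian precisely when its fibers are equidistant.

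The core step is therefore to average the $S_{n+1}$-invariant metric along the fibers of the face maps, which are themselves copies of source fibers of $G$ and so carry the measures $\mu^x$. Integrating the metric over these fibers against the cutoff-normalized Haar measure produces a metric whose transverse (horizontal) part is constant along the fibers, hence descends to $\G{n-1}$; this is exactly the condition that the face map be a Riemannian submersion, equivalently that its fibers be equidistant. One must check that this fiber-averaging can be carried out compatibly with the $S_{n+1}$-symmetrization, so that $S_{n+1}$-invariance and the Riemannian-submersion property hold \emph{simultaneously}. This is most cleanly arranged by performing the averaging in an $S_{n+1}$-symmetric manner from the outset, rather than fixing one face and symmetrizing afterward (the latter could destroy the submersion property we just arranged). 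I would also use here the fact, noted in the excerpt, that the metrics induced on $\G{n-1}$ by the different faces automatically agree, so the descended data is consistent.

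The main obstacle I anticipate is precisely this reconciliation, together with the verification that the averaged tensor remains positive-definite. Averaging a metric over a noncompact domain can in principle degenerate it into a merely positive-semidefinite form; here the cutoff normalization and the effective compactness coming from properness are exactly what rule this out, guaranteeing an honest Riemannian metric. The remaining points — that all face maps, not just one, are Riemannian submersions, and that inversion and the other simplicial symmetries are respected — then follow formally from the invariance built in during the averaging. I emphasize that one cannot reach all $n$ simply by extending a low-degree metric upward (the excerpt records that admitting an $n$-metric need not imply admitting an $(n+1)$-metric for $n=0,1,2$); the uniform direct construction on each $\G{n}$ is what makes the statement hold for \emph{all} $n$.
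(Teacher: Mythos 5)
Your overall strategy (exploit properness via a Haar system plus cutoff to average, in analogy with compact groups) is the right one, and it is indeed the engine of the paper's proof. But your core step is not a well-defined operation, and the difficulty you correctly flag is left unresolved. ``Integrating the metric over the fibers'' of a face map against a fiberwise measure does not make sense as stated: to average a metric along a fiber you must be able to compare tangent spaces at different points of that fiber, i.e.\ you need an \emph{action} by diffeomorphisms whose orbits are the fibers, not merely a measure on them. The relevant structure, which your proposal never identifies, is that each face map $\eps_i:\G{n}\to\G{n-1}$ is the quotient map of a principal $G$-action on $\G{n}$ (for $n=1$ these are left and right translation, covering $\t$ and $\s$). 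The correct mechanism is then: a metric invariant under such a principal action descends to the base and makes the projection a Riemannian submersion. So what must be produced is a metric on $\G{n}$ that is simultaneously invariant under all $n+1$ of these $G$-actions and under $S_{n+1}$ --- and this is exactly the reconciliation problem you name, for which ``performing the averaging in an $S_{n+1}$-symmetric manner from the outset'' is not an argument: each groupoid-averaging is an integral operation, and one must actually prove it does not destroy the invariances already arranged.

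The paper resolves this with the \emph{gauge trick}, which is the idea missing from your proposal. One lifts to the manifold $G^{[n+1]}$ of $(n+1)$-tuples of arrows with common source, which is a principal $G$-bundle over $\G{n}$ via
\[
\pi^{(n)}(h_0,\dots,h_n)=(h_0h_1^{-1},\dots,h_{n-1}h_n^{-1}),
\]
the principal action being the (free and proper) right diagonal multiplication. Properness is used once, to average an arbitrary metric on $G$ into one invariant under left translations; the product of this metric on $G^{[n+1]}$ is then \emph{automatically} invariant under the $S_{n+1}$-action (permutation of coordinates) and under the $n+1$ left $G$-actions (left multiplication in each coordinate), which cover the face maps downstairs. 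Since all these actions commute with the right principal action, a single further averaging over that action produces a metric invariant under everything at once, with no reconciliation needed; pushing down along $\pi^{(n)}$ then yields the $n$-metric. In short: your proposal has the right tools but attempts the averaging downstairs on $\G{n}$, where the several required invariances compete; the paper's proof moves upstairs to $G^{[n+1]}$, where all but one of the invariances hold for free and only one averaging is performed. (A downstairs argument can in fact be salvaged by observing that the $n+1$ principal $G$-actions on $\G{n}$ pairwise commute, so successive averagings preserve earlier invariances, and $S_{n+1}$ merely permutes these actions; but that observation, or the gauge trick replacing it, is precisely the content your proof is missing.)
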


\begin{proof}
The proof uses the following trick, called in \cite{fdh0} the \textbf{gauge trick}.  For each $n$, consider the manifold $G^{[n]}\subset G^n$ of  $n$-tuples of arrows with the same source, and the map 
\[ \pi^{(n)}:G^{[n+1]}\to\G{n},\quad (h_0,h_1\dots,h_{n})\longmapsto (h_0h_1^{-1},h_1h_2^{-1},\dots,h_{n-1}h_{n}^{-1}).\] 
The fibers of $\pi^{(n)}$ coincide with the orbits of the right-multiplication action,
$$
\begin{matrix}
G^{[n+1]}\raction G\\
(h_0,\dots,h_{n})\cdot k = (h_0k,\dots,h_{n}k).
\end{matrix}
\qquad \qquad
\begin{matrix}
\xymatrix@R=10pt{\bullet & & \\ \bullet & \bullet \ar[lu]_{h_0} \ar[l]\ar[ld]^{h_{n}} & \bullet \ar[l]^k \\ \bullet }\end{matrix}
$$
and this action is free and proper, hence defining a principal $G$-bundle. The strategy is to define a metric on $G^{[n+1]}$ in a such way that $\pi^{(n)}$ becomes a Riemannian submersion, and that the resulting metric on $\G{n}$ is a $n$-metric.

The group $S_{n+1}$ acts on the manifold $G^{[n+1]}$ by permuting its coordinates, and this action covers the action $S_{n+1}\action\G{n}$, so the map $\pi^{(n)}$ is $S_{n+1}$ equivariant.
On the other hand, there are $(n+1)$ left groupoid actions $G\action G^{[n+1]}$, each consisting in left multiplication on a given coordinate.
$$
\begin{matrix}
\xymatrix@R=10pt{\bullet & \bullet \ar[l]^k & \\ & \bullet & \bullet \ar[lu]_{h_0} \ar[l] \ar[ld]^{h_{n}}  \\ & \bullet }\end{matrix}
\qquad
\begin{matrix}
\xymatrix@R=10pt{  & \bullet & \\\bullet & \bullet \ar[l]|k & \bullet \ar[lu]_{h_0} \ar[l] \ar[ld]^{h_{n}}  \\ & \bullet }\end{matrix}
\qquad\dots\qquad
\begin{matrix}
\xymatrix@R=10pt{  & \bullet &  \\ & \bullet & \bullet \ar[lu]_{h_0} \ar[l] \ar[ld]^{h_{n}}  \\\bullet & \bullet \ar[l]_k &}\end{matrix}
$$
These left actions commute with the above right action and cover $(n+1)$-principal actions $G\action\G{n}$, with projection the face maps $\eps_i:\G{n}\to\G{n-1}$.

Now for a proper groupoid one can use averaging to construct a metric on $G$ which is invariant under the left action of $G$ on itself by left translations. The product metric on $G^{[n+1]}$ is invariant both under the $(n+1)$ left $G$-actions above and the $S_{n+1}$-action. In general, it will not be invariant under the right $G$-action $G\action G^{[n+1]}\to \G{n}$, but we can average it to obtain a new metric which is invariant under all actions. It follows that the resulting metric on $\G{n}$ is an $n$-metric.
\end{proof}

\begin{rem}
The maps $\pi^{(n)}:G^{[n+1]}\to\G{n}$, $n=0,1,\dots$ that appear in this proof form the simplicial model for the universal principal $G$-bundle $\pi:EG\to BG$. This bundle plays a key role in many different constructions associated with the groupoid (see \cite{fdh1}).
\end{rem}

Still, there are many examples of groupoids, which are not necessarily proper, but admit a $n$-metric. Some are given in the next set of examples. 

\begin{ex}
The unit groupoid $M\tto M$ obviously admits $n$-metrics. The pair groupoid $M\times M\tto M$ and, more generally, the submersion groupoid $M\times_N M\tto M$ associated with a submersion $\pi: M\to N$, are proper so also admit $n$-metrics. For example, if $\eta$ is a metric on $M$ which makes $\pi$ a Riemannian submersion, then one can take $\e0=\eta$, $\e1=p_1^*\eta+p_2^* \eta-p_N^*\eta_N$, 
$\e2=p_1^*\eta+p_2^* \eta+p_3^* \eta-2 p_N^*\eta_N$, etc.
\end{ex}

\begin{ex}
Let $\F$ be a foliation of $M$. Then $\Hol(\F)$ and $\Pi_1(\F)$ admit a $n$-metric if and only if $\F$ can be made into a Riemannian foliation. We already know that if these groupoids admit a $n$-metric, then the underlying foliation, i.e. $\F$, must be Riemannian. Conversely, if $\F$ is Riemannian then $\Hol(\F)$ is a proper groupoid (see, e.g., \cite{m}), so it carries a $n$-metric. Since $\Pi_1(\F)$ is a covering of $\Hol(\F)$, it also admits a $n$-metric. Not that, in general, $\Pi_1(\F)$ does not need to be proper (e.g., if the fundamental group of some leaf is not finite).
\end{ex}

\begin{ex}
Any Lie group admits a $n$-metric. More generally, the action Lie groupoid associated with any isometric action of a Lie group on a Riemannian manifold admits a $n$-metric. This can be shown using a gauge trick, similar to the one used in the proof of existence of a $n$-metric for a proper groupoid sketched above (see \cite{fdh0}). Note that an isometric action does not need not be proper (e.g., if some isotropy group is not compact).
\end{ex}

Finally, we can sketch the proof of the main Linearization Theorem, which we restate now in the following way:

\begin{thm}
\label{thm:main:2}
Let $G\tto M$ be a Lie groupoid endowed with a 2-metric $\eta^{(2)}$, and let $S\subset M$ be a saturated embedded submanifold. Then the exponential map defines a weak linearization of $G$ around $S$.
\end{thm}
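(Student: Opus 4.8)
The plan is to imitate the classical proof of Ehresmann's Theorem sketched earlier in Lecture~4, where the linearization is the exponential map of a metric making the submersion Riemannian, but to carry it out simultaneously at the levels $\G0=M$, $\G1=G$, and $\G2$ so that the resulting diffeomorphism is a \emph{groupoid} morphism. The key point is that a $2$-metric $\e2$ induces compatible $1$- and $0$-metrics $\e1$, $\e0$, and all three exponential maps are tied together by the face maps, which are Riemannian submersions. First I would work on $M=\G0$: since $\e0$ makes the orbit foliation Riemannian and $S$ is saturated, the normal exponential map of $S$ gives a diffeomorphism from a neighborhood $V\subset\nu(S)$ onto a neighborhood $U\subset M$, restricting to the identity on $S$. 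This is the classical statement that in a Riemannian foliation, normal geodesics to a saturated submanifold sweep out a tubular neighborhood compatibly with the foliation.

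Next I would repeat the construction one level up, on $G=\G1$. The restricted groupoid $G_S=\s^{-1}(S)=\t^{-1}(S)$ is a saturated embedded submanifold of $G$ for the orbit foliation of the $1$-metric $\e1$, so its normal exponential map produces a diffeomorphism $\phi_1$ from a neighborhood $\widetilde V\subset\nu(G_S)$ onto a neighborhood $\widetilde U\subset G$, the identity on $G_S$. The crucial compatibility is that $\s,\t:G\to M$ are Riemannian submersions for $\e1,\e0$ and restrict nicely to $G_S\to S$; because Riemannian submersions send geodesics perpendicular to the fibers to geodesics, the exponential maps intertwine, so that the square relating $\phi_1$ on $\nu(G_S)$ with $\phi_0$ on $\nu(S)$ commutes with both source and target. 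After identifying $\nu(G_S)\tto\nu(S)$ with the linear model via the isomorphism of the proposition in Lecture~3, this shows $\phi_1$ covers $\phi_0$ and is compatible with $\s$ and $\t$.

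The remaining step, and the main obstacle, is to show that $\phi_1$ is multiplicative, i.e.\ $\phi_1(gh)=\phi_1(g)\phi_1(h)$ on composable arrows. This is exactly where the full strength of the $2$-metric $\e2$ enters: multiplication $m:\G2\to G$ is a Riemannian submersion for $\e2,\e1$, and $G_S^{(2)}=m^{-1}(G_S)\cap(\dots)$, the composable arrows lying over $S$, is again saturated. I would run the same normal-exponential construction on $\G2$ to obtain $\phi_2$, and then use that all three face maps $\eps_0,\eps_1,\eps_2:\G2\to\G1$ (which are $m$, $\pi_1$, $\pi_2$ up to the standard identifications) are Riemannian submersions inducing the \emph{same} $\e1$. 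Since each $\eps_i$ intertwines the exponential on $\G2$ with that on $\G1$, the three identities force $\phi_1\circ m=m\circ\phi_2$ together with $\phi_1\circ\pi_i=\pi_i\circ\phi_2$, and these combine to give multiplicativity of $\phi_1$. The delicate part is ensuring the neighborhoods $\widetilde U,\widetilde V,U,V$ can be chosen \emph{coherently}, so that $\phi_2$ is defined wherever its faces need it and the $S_{n+1}$-invariance of the metrics makes the geodesic normal frames equivariant; this bookkeeping, rather than any single hard estimate, is where the real work lies, and it is precisely what the $S_{n+1}$-invariance in the definition of an $n$-metric is designed to make automatic.
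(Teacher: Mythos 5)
Your proposal is correct and follows essentially the same route as the paper's proof: exponential maps of the induced metrics $\e0$, $\e1$, $\e2$ on $\G0$, $\G1$, $\G2$, intertwined by the face maps (which are Riemannian submersions), with multiplicativity extracted at the level of $\G2$ --- precisely the paper's commutative diagram of exponentials. The only difference is bookkeeping: the paper sidesteps your ``coherence'' concern by defining the arrow-level neighborhood as the pullback $\widetilde V=(\d\s)^{-1}(V)\cap(\d\t)^{-1}(V)\cap \mathrm{Domain}(\exp_{\e1})$, so that injectivity of $\exp_{\e1}$ on $\widetilde V$ follows from injectivity of $\exp_{\e0}$ on $V$ plus uniqueness of horizontal lifts of geodesics, and the level-2 domain is simply $\widetilde V\times_V\widetilde V$.
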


\begin{proof}
One choses a neighborhood $S\subset V\subset \nu(S)$ where the exponential map of $\e0$ is a diffeomorphism onto its image, and set
\[ \widetilde V = (\d s)^{-1}(V)\cap (\d t)^{-1}(V) \cap \text{Domain of }\exp_{\e1}\subset \nu(G_S).\]
One shows that $\widetilde{V}\tto V$ is a groupoid neighborhood of $G_S\tto S$ in the linear model $\nu(G_S)\tto\nu(S)$ and that we have a commutative diagram:
\[ 
\xymatrix@1{
\widetilde{V}\times_V \widetilde{V} \ar[rr]^{\exp_{\e2}}\ar@<0.5pc>[d] \ar[d] \ar@<-0.5pc>[d]& & \G2 \ar@<0.5pc>[d] \ar[d] \ar@<-0.5pc>[d] \\
\widetilde{V} \ar[rr]^{\exp_{\e1}} \ar@<0.25pc>[d] \ar@<-0.25pc>[d] & & G\ar@<0.25pc>[d] \ar@<-0.25pc>[d]\\
V \ar[rr]^{\exp_{\e0}}  & &M}
\]
It follows that the exponential maps of $\e1$ and $\e0$ give the desired weak linearization, i.e., a groupoid isomorphism
\[ (\widetilde{V}\tto V)\overset{\exp}\cong (\exp(\widetilde{V})\tto \exp(V)). \]
\end{proof}


\subsection*{Acknowledgments}
I would specially like to thank Matias del Hoyo, for much of the work reported in these notes rests upon our ongoing collaboration \cite{fdh0,fdh1,fdh2}. He and my two PhD students, Daan Michiels and Joel Villatoro, made several very usual comments on a preliminary version of these notes, that helped improving them. I would also like to thank Marius Crainic, Ioan Marcut, David Martinez-Torres and Ivan Struchiner, for many fruitful discussions which helped shape my view on the linearization problem. Finally, thanks go also to the organizers of the XXXIII WGMP, for the invitation to deliver these lectures and for a wonderful stay in Bialowieza.

\end{document}